\documentclass{amsart}
\usepackage{graphicx} 
\usepackage{amssymb}
\usepackage{amsmath}
\usepackage{amsthm}
\usepackage{amsfonts}
\usepackage{biblatex}
\usepackage{breqn}
\usepackage{algorithm} 
\usepackage{algpseudocode} 
\usepackage{tikz}
\usepackage{hyperref}

\DeclareRobustCommand{\stirlings}{\genfrac\{\}{0pt}{}}

\title{Duality relations of graph polynomials}
\author{Medet Jumadildayev}
\address{Institute of Mathematics and Mathematical Modeling, Almaty, Kazakhstan}
\address{Nazarbayev University, Astana, Kazakhstan}
\email{medet.jumadildayev@nu.edu.kz}
\subjclass[2020]{05C31, 33C45, 05C38, 05C85}

\newtheorem{thm}{Theorem}[section]
\newtheorem{dfn}[thm]{Definition}
\newtheorem{crl}[thm]{Corollary}
\newtheorem{prp}[thm]{Proposition}
\newtheorem{lm}[thm]{Lemma}

\bibliography{references} 

\begin{document}

\begin{abstract}
    The duality theorem of Lass relates the matching polynomials of a simple graph $G$ with the matching polynomials of its complement $\bar G$. In particular, this relation gives rise to Godsil's result, which offers a nice interpretation of the Lebesgue–Stieltjes integral associated with the Hermite orthogonality measure. In this work, we introduce the concept of path-cover polynomials. Similar to matching polynomials, we show that path-cover polynomials also satisfy duality relations and give combinatorial interpretations of the Lebesgue-Stieltjes integral and the inner product in the space of associated Laguerre polynomials. Similar duality relations hold for clique-cover polynomials and chromatic polynomials. As applications, we find an efficient algorithm that computes graph polynomials for cographs. We also give explicit formulas to compute the number of Hamiltonian paths and cycles in complete multipartite graphs.
\end{abstract}

\maketitle

\section{Introduction}

Let $G = (V, E)$ be a simple graph. Our discussion focuses on the path-cover polynomial $\pi_G(t)$, the matching polynomial $\mu_G(t)$, the clique-cover polynomial $\xi_G(t)$, and the chromatic polynomial $\chi_G(t)$. Denote $\pi_G^+(t)$ as the signed path-cover polynomial, and $\mu_G^+(t)$ as the signed matching polynomial. We give the technical definitions in Section 2. 

It is well known that the matching polynomials of $G$ determine the signed matching polynomial of its complement \cite{Lass}. We extend this result to other graph polynomials. The main result is the duality theorem for graph polynomials.

\begin{thm} Let $D = \frac{d}{dt}$. For any simple graph $G$ and its complement $\bar G$, the following duality relations hold:
    \begin{enumerate}
        \item \( \pi_G(t) = \phi_\pi(\pi_{\bar G} ^ +(t))\) for operator $\phi_\pi = e ^ {t D^2}$,
        \item \( \chi_G(t) = \phi_\chi \left[ \xi_{\bar G}(t) \right] \) for operator $\phi_\chi[t ^ n] = (t)_n = t (t - 1) \cdots (t - n + 1)$ is the falling factorial,
        \item \(\xi_G(t) = \phi_\xi \left[ \chi_{\bar G}(t) \right]\) for operator $\phi_\xi\left[t ^ n\right] = \sum_{k = 1} ^ n \stirlings{n}{k} t ^ k$ where $\stirlings{n}{k}$ is a Stirling number of the second kind.
    \end{enumerate}
\end{thm}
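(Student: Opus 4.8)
The plan is to treat the three relations separately but under a common philosophy: each graph polynomial is a generating function over partitions of $V$ into blocks of a prescribed type, and each operator is the algebraic incarnation of a correspondence between blocks in $G$ and blocks in $\bar G$. Under this lens relations (2) and (3) collapse to a single change-of-basis identity, and only relation (1) is genuinely hard.

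For (2) and (3) the crucial observation is that a clique of $G$ is exactly an independent set of $\bar G$, so a partition of $V$ into $k$ cliques of $G$ is the same datum as a partition of $V$ into $k$ independent sets of $\bar G$. Writing $W_k(H)$ for the number of partitions of $V$ into $k$ independent sets of a graph $H$, I would record the expansions $\xi_G(t)=\sum_k W_k(\bar G)\,t^k$ and $\chi_G(t)=\sum_k W_k(G)\,(t)_k$, the falling factorial arising because a proper $t$-colouring first chooses $k$ colour classes and then injects them into the palette in $(t)_k$ ways. Both statements then reduce to the claim that $\phi_\chi$ and $\phi_\xi$ are mutually inverse: $\phi_\chi$ carries $\xi_{\bar G}(t)=\sum_k W_k(G)t^k$ to $\sum_k W_k(G)(t)_k=\chi_G(t)$, while $\phi_\xi$ must carry $\chi_{\bar G}(t)=\sum_k W_k(\bar G)(t)_k$ back to $\sum_k W_k(\bar G)t^k=\xi_G(t)$. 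The single identity to verify is $\phi_\xi[(t)_n]=t^n$: expanding $(t)_n=\sum_j s(n,j)t^j$ in the monomial basis and applying $\phi_\xi$ gives $\sum_k\bigl(\sum_j s(n,j)\stirlings{j}{k}\bigr)t^k=t^n$ by the orthogonality $\sum_j s(n,j)\stirlings{j}{k}=\delta_{nk}$ of the two kinds of Stirling numbers. Equivalently, $\phi_\chi$ and $\phi_\xi$ are by definition the transition operators between the monomial and falling-factorial bases, hence inverse.

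For (1) I would argue by inclusion–exclusion on the forbidden edges. Embedding every path cover of $G$ into the universe of path covers of $K_n$, a cover is admissible precisely when it uses no edge of $\bar G$; expanding this indicator yields $\pi_G(t)=\sum_S(-1)^{|S|}N_S(t)$, where $S$ ranges over partial systems of vertex-disjoint $\bar G$-paths (equivalently, path covers of $\bar G$ with singletons allowed) and $N_S(t)$ is the $t$-enumerator, by number of components, of the path covers of $K_n$ containing $S$. Since $K_n$ is complete, $N_S(t)$ depends on $S$ only through its number of components $c(S)$ and equals the generating function $M_{c(S)}(t)$ for all ways of linking $c(S)$ labelled paths, end to end, into fewer paths; as $S$ has $n-c(S)$ edges we may rewrite $(-1)^{|S|}=(-1)^{\,n-c(S)}$. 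Collecting terms by $S$ then gives $\sum_S(-1)^{\,n-c(S)}M_{c(S)}(t)$, which is $\phi_\pi$ applied to $\sum_S(-1)^{\,n-c(S)}t^{c(S)}$; this last sum I would identify with the signed path-cover polynomial $\pi_{\bar G}^+(t)$ under the sign convention fixed in Section~2. What remains is to recognise the merge operator $t^c\mapsto M_c(t)$ as $e^{tD^2}$.

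The main obstacle is exactly this identification. Concretely I must show that the number of ways to link $c$ labelled paths into $c-m$ paths is the coefficient $\tfrac{c!}{m!\,(c-2m)!}$ produced by $e^{tD^2}[t^c]=\sum_m \tfrac{c!}{m!(c-2m)!}t^{c-m}$, and here the paper's precise normalisation of $\pi_G$ and $\pi_G^+$ (orientation of paths and the treatment of trivial one-vertex paths) is essential, since a naive count of undirected joins differs from this by powers of two. I expect to settle it either by a direct bijection between $m$-fold linkages and the pattern of the $2m$-th derivative, or, more robustly, by replacing the inclusion–exclusion with a sign-reversing involution that toggles a single canonically chosen $\bar G$-edge and cancels every configuration in which such an edge survives, leaving precisely the end-pairings weighted by $t^m/m!$ in $D^{2m}$. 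Since the analogous statement for matchings is Lass's theorem, I would model the involution on his, substituting linkages of path-ends for pairings of vertices.
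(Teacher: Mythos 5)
Your treatment of (2) and (3) is correct and is essentially the paper's own argument: the paper invokes the expansion $\chi_G(t)=\sum_k s_k(\bar G)\,(t)_k$ (citing Bollob\'as) and defines $\phi_\chi=\phi_\xi^{-1}$, which is exactly your Stirling-orthogonality step; you merely reprove the cited expansion via colour classes, which is harmless. The skeleton of your argument for (1) likewise coincides with the paper's proof of Theorem \ref{path_duality}: your sum over systems $S$ of vertex-disjoint directed $\bar G$-paths, with $N_S(t)$ depending only on the number of components $c(S)$, is the same inclusion--exclusion as the paper's identity $p_k(G)=\sum_{j}(-1)^j p_k(K_{n-j})\,p_{n-j}(\bar G)$, and your identification $\sum_S(-1)^{n-c(S)}t^{c(S)}=\pi^{+}_{\bar G}(t)$ is right; since all paths are directed throughout the paper, the ``powers of two'' you worry about never arise.

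The genuine gap is in the step you yourself flag as the main obstacle, and as posed it cannot be closed: your expansion $e^{tD^2}[t^c]=\sum_m\frac{c!}{m!\,(c-2m)!}\,t^{c-m}$ is false, because $t$ and $D$ do not commute, so $e^{tD^2}\neq\sum_m t^mD^{2m}/m!$ --- you have carried over the shape of the matching operator $e^{D^2/2}$, where no $t$ appears and the naive expansion is legitimate. Concretely, $\frac{1}{2!}(tD^2)^2\,t^4=36\,t^2$, while your formula predicts $12\,t^2$; the correct coefficients are Lah numbers, $e^{tD^2}[t^c]=\sum_{j}\frac{c!}{j!}\binom{c-1}{j-1}t^{j}=\pi_{K_c}(t)$. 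Had you attempted your proposed bijection between $m$-fold linkages and the coefficient $\frac{c!}{m!(c-2m)!}$, the counts would already disagree at $c=4$, $m=2$: there are $L(4,2)=36$ ways to link four labelled directed paths into two, not $12$. The repair is short and is precisely the paper's Lemma \ref{path-pre}: one application of $tD^2$ to $t^c$ selects an ordered pair of paths ($c(c-1)$ choices, via the two differentiations) and merges them end-to-start into a single directed path (the factor $t$), and $\frac{1}{k!}(tD^2)^k$ performs $k$ such merges with the $k!$ orderings of the merges cancelled, yielding exactly your merge generating function $M_c(t)=\pi_{K_c}(t)$. With that substitution your argument closes, and no Lass-style sign-reversing involution is needed.
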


In Section 3, we introduce the concept of path-cover polynomials and show that they satisfy duality relations. Duality allows us to interpret the Lebesgue-Stieltjes integral and the inner product in the space of associated Laguerre polynomials $L_{n}^{(-1)}(t)$. In particular, they are expressed in terms of the number of Hamiltonian cycles. Godsil \cite{Godsil} and Lass \cite{Lass} found similar relations for matching polynomials in the context of Hermite polynomials. We complement Lass's results by interpreting the inner product in the space of Hermite polynomials as the number of perfect matchings in join graphs.

Section 6 is devoted to applications. A cograph is a graph generated from a single vertex using join $(\triangledown)$ and union $(\cup)$ operations. In Section 6.1, we introduce an efficient algorithm that computes the number of perfect matchings, Hamiltonian paths, Hamiltonian cycles, and proper colorings of cographs.

Horák and Továrek \cite{Horak} obtained a recurrence formula for the number of Hamiltonian cycles for a graph $K_{\alpha_1, \alpha_2, \cdots, \alpha_m}$. In Sections 6.2 and 6.3, we find several ways to express the number of Hamiltonian paths and Hamiltonian cycles for the graph $K_{\alpha_1, \alpha_2, \cdots, \alpha_m}$, and also an explicit formula for the complete $m\text{-partite}$ graph $K_{n, n, \cdots, n}$. Our results enable efficient computations of Hamiltonian paths and Hamiltonian cycles in complete $m\text{-partite}$ graphs.

\section{Graph Polynomials}

Let $\mathcal G$ be a set of graphs $G$ without loops and multiple edges. Let $K$ be a ring and $X = \{x_1, x_2, \cdots\}$ be a set indeterminates. A graph polynomial $P_G(x_1, x_2, \cdots)$ is a polynomial in $K[X]$ such that $P_G(x_1, x_2, \cdots) = P_H(x_1, x_2, \cdots)$ whenever $G$ and $H$ are isomorphic.

Let $G = (V, E)$ be a graph with $n$ vertices. Below, we outline the graph polynomials we consider.

\textbf{The path-cover polynomial.} A $k\text{-path cover}$ of $G$ is a set of $k$ vertex-disjoint directed paths covering $V$, including trivial paths consisting of a single vertex. Note that a $1\text{-path cover}$ is a directed Hamiltonian path. Let $p_k(G)$ be the number of $k\text{-path covers}$ of $G$. The path-cover polynomial $\pi_G(t)$:

\[
\pi_G(t) = \sum_{k = 1} ^ n p_k(G) t ^ k.
\]
We define the signed path-cover polynomial $\pi_G^+(t)$ as follows:

\[
\pi_{{G}}^{+}(t) = \sum_{k = 1} ^ {n} (-1) ^ {n - k} p_k(G) t ^ k.
\]
In particular, there is an obvious relationship between $\pi_G(t)$ and $\pi_G^+(t)$:

\[
\pi_G(t) = (-1) ^ {n} \pi_G^+(-t).
\]

\,

\textbf{The matching polynomial.} A $k\text{-matching}$ of a graph $G$ is a subset of edges $E' \subseteq E$ with cardinality $|E'| = k$ such that no two edges in a $E'$ have a shared vertex. Denote $m_k(G)$ as the number of $k\text{-matchings}$. We define the matching polynomial $\mu_G(t)$ as follows:

\[
\mu_G(t) = \sum_{k = 0} ^ {n / 2}  m_k(G)t ^ {n - 2 k}.
\]
The signed matching polynomial $\mu^+_{ G}(t)$ is defined as follows:

\[
\mu^+_{ G}(t) = \sum_{k = 0} ^ {n / 2} (-1) ^ {n - k} m_k(G) t ^ {n - 2k}.
\]
There is a relationship with the unsigned matching polynomial:

\[
\mu_G^+(t) = (-i) ^ n \phi_\mu(\mu_G(it)), \qquad i = \sqrt{-1}.
\]

\,

\textbf{The clique-cover polynomial} A $k\text{-clique-cover}$ of $G$ is a set of $k$ vertex-disjoint cliques covering $V$, including trivial cliques consisting of a single vertex. Let $s_k(G)$ be the number of $k\text{-clique-covers}$ of $G$. The clique-cover polynomial $\xi_G(t)$:

\[
\xi_G(t) = \sum_{k = 1} ^ n s_k(G) t ^ k.
\]

\textbf{The chromatic polynomial.} Let $\chi_G(t)$ denote the number of proper colorings of a graph $G$ in $t$ colors. Birkhoff \cite{Birkhoff} proved that $\chi_G(t)$ is a polynomial. We call $\chi_G(t)$ \textit{chromatic polynomial}. 

\,

The union of graphs $G_1 = (V_1, E_1)$ and $G_2 = (V_2, E_2)$ with disjoint vertex sets $V_1$ and $V_2$ and edge sets $E_1, E_2$ is the graph with vertex set $V_1 \cup V_2$ and edge set $E_1 \cup E_2$. We denote the union of graphs as $G_1 \cup G_2$. The join of graphs $G_1 \triangledown G_2$ is the graph union together with all the edges between $V_1$ and $V_2$. A join graph is a graph that can be expressed as the join of two graphs.

We use the lemma below to compute the graph polynomial of join graphs.

\begin{lm}\label{iso_prod}
    Let $(\mathbb K[t], \times)$ be a polynomial algebra with a commutative and associative multiplication. Then, there exists an operator $\phi$ such that the following holds for any $a(t), b(t) \in \mathbb K[t]$:

    \[
    a(t) \times b(t) = \phi\left[ \phi^{-1}[a(t)] \phi^{-1}[b(t)]\right].
    \]
\end{lm}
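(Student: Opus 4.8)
The plan is to realize $\phi$ as the unique linear operator that carries the standard monomial basis of $\mathbb{K}[t]$ onto the basis of $\times$-powers of a single generator, so that $\phi$ becomes an algebra isomorphism from ordinary multiplication to $\times$. First I would unpack the hypothesis: the assertion that $(\mathbb{K}[t], \times)$ is a polynomial algebra means it is a unital, commutative, associative algebra generated by one element. Concretely, there is a unit $e$ for $\times$ and a generator $g$ such that the family $\{g^{\times n}\}_{n \ge 0}$, where $g^{\times 0} := e$ and $g^{\times n} := g \times \cdots \times g$ ($n$ factors), is a $\mathbb{K}$-vector-space basis of $\mathbb{K}[t]$.

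Second, I would define $\phi \colon \mathbb{K}[t] \to \mathbb{K}[t]$ to be the linear operator determined on the standard basis by $\phi[t^n] = g^{\times n}$ for all $n \ge 0$. Since $\phi$ sends one basis bijectively onto another, it is a linear isomorphism, so $\phi^{-1}$ exists as a linear operator. The key algebraic step is to verify that $\phi$ intertwines the two products: from $g^{\times m} \times g^{\times n} = g^{\times (m+n)}$ one gets $\phi[t^m \cdot t^n] = \phi[t^{m+n}] = g^{\times(m+n)} = \phi[t^m] \times \phi[t^n]$ on basis elements, and this extends to all of $\mathbb{K}[t]$ by bilinearity of both $\cdot$ and $\times$. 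Hence $\phi$ is an algebra isomorphism from $(\mathbb{K}[t], \cdot)$ onto $(\mathbb{K}[t], \times)$.

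Third, the stated identity follows formally. For arbitrary $a(t), b(t)$, I write $a = \phi[\phi^{-1}[a]]$ and $b = \phi[\phi^{-1}[b]]$ and apply the homomorphism property:
\[
a(t) \times b(t) = \phi[\phi^{-1}[a]] \times \phi[\phi^{-1}[b]] = \phi\!\left[\phi^{-1}[a] \cdot \phi^{-1}[b]\right],
\]
which is exactly the claim.

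I expect the only genuine obstacle to lie in the structural input of the first step: exhibiting a generator $g$ whose $\times$-powers form a basis, equivalently checking that $(\mathbb{K}[t], \times)$ is truly isomorphic to the standard polynomial ring rather than some degenerate commutative associative product (a unitless or nilpotent multiplication would admit no such $\phi$, so the words ``polynomial algebra'' must be doing real work here). In the intended applications this is automatic: each operation $\times$ coming from the join of graphs is a graded, binomial-type convolution for which the degree-one element may be taken as $g$, whence the powers $g^{\times n}$ have strictly increasing degree and are therefore linearly independent and spanning. Once $g$ is identified, everything that remains is routine linear algebra.
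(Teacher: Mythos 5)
Your proposal is essentially the paper's own proof: the paper likewise takes $\phi$ to be the basis-change operator $\phi[t^n] = t^{\times n}$ (i.e.\ your construction with $g = t$ as the generator) and derives the identity by expanding both sides in the two bases, exactly as in your third step. The only difference is that where the paper simply asserts it is ``evident'' that the powers $t^{\times n}$ form a basis, you correctly flag this as the one nontrivial hypothesis and supply the degree-one-generator, strictly-increasing-degree justification that makes it automatic in the graph-polynomial applications.
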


\begin{proof}
    Denote $t^{\times n} = \underbrace{t  \times \ldots \times t}_{n \text{ times}}$. It is evident that $ t^ {\times n}$ forms the basis of $\mathbb K[t]$. Then, there are two ways to express a polynomial $p(t)$:

    \[
    p(t) = \sum_{i \geq 0} a_i t ^ i = \sum_{i \geq 0} a_i' t ^ {\times i}.
    \]
    Let $\phi$ be an operator that changes the basis:

    \[
    \phi[t ^ n] = t ^ {\times n}.
    \]
    Then, one can easily verify that:

    \[
    a(t) \times b(t) = \sum_{i \geq 0} \sum_{j \geq 0} a'_i b'_j t ^ {\times (i + j)} = \phi \left[\sum_{i \geq 0} \sum_{j \geq 0} a_i' b_j' t ^ {(i + j)} \right] =
    \]
    \[
    \phi \left[ \sum_{i \geq 0} a_i' t ^ i \sum_{j \geq 0} b_j' t ^ j\right] = \phi \left[ \phi ^ {-1} \left[ \sum_{i \geq 0} a_i' t ^ {\times i} \right] \phi ^ {-1} \left[ \sum_{j \geq 0} b_j' t ^ {\times j} \right] \right] =
    \]
    \[
    \phi\left[ \phi^{-1}[a(t)] \phi^{-1}[b(t)]\right].
    \]
\end{proof}

\section{The path-cover polynomial}

In this section, we outline the properties of the path-cover polynomial. In Section 3.1, we express the path-cover polynomials of join graphs. In Section 3.2, we prove the duality theorem for the path-cover polynomial. The second proof of Theorem \ref{path-main} uses duality. In Sections 3.3 and 3.4, we show the relationships between the associated Laguerre polynomials and the path-cover polynomial.

\subsection{Path-cover polynomial of join graphs}

We prove the following lemma, which expresses the path-cover polynomial of a complete multipartite graph.

\begin{lm}\label{path-pre}
    Define a distributive product $(\times)$ as $t ^ a \times t ^ b = \pi_{K_{a, b}}(t)$. Then, the product is associative and commutative, and satisfies the following:

    \[
    t ^ {\alpha_1} \times t ^ {\alpha_2} \times \cdots t ^ {\alpha_n} = \phi_\pi \left[ \phi_\pi ^ {-1}[t ^ {\alpha_1}] \phi_\pi ^ {-1}[t ^ {\alpha_2}] \cdots \phi_\pi ^ {-1}[t ^ {\alpha_n}]\right] = \pi_{K_{\alpha_1, \alpha_2, \cdots \alpha_n}}(t),
    \]
    where $\phi_\pi = e ^ {tD ^ 2}$, $\phi_\pi ^{-1} = e ^ {-tD ^ 2}$.
\end{lm}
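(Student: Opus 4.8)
The plan is to reduce the whole statement to a single combinatorial recursion for joins, and then to identify the resulting change-of-basis operator with $e^{tD^2}$ by invoking Lemma \ref{iso_prod}.

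\textbf{Combinatorial heart.} The step I would establish first is that attaching an independent set multiplies the path-cover polynomial by a monomial under $\times$: for any graph $G$ and the edgeless graph $\bar K_\beta$ on $\beta$ vertices,
\[
\pi_{G\triangledown \bar K_\beta}(t) = \sum_{k\ge 1} p_k(G)\,\pi_{K_{k,\beta}}(t).
\]
I would prove this by a bijection. Given a path cover of $H = G\triangledown\bar K_\beta$, delete the $\beta$ vertices $B$ of the independent part. Since $B$ is independent and every $B$-vertex is adjacent to all of $G$, in each directed path of $H$ the $B$-vertices are pairwise non-consecutive, and the maximal runs of $G$-vertices are directed paths of $G$; hence deletion leaves a $k$-path cover of $G$ for some $k$. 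Conversely, reassembling the $k$ resulting oriented $G$-segments together with the $\beta$ labeled $B$-vertices into directed paths of $H$ is exactly the same as forming a directed path cover of the complete bipartite graph $K_{k,\beta}$ whose two parts are the $k$ segments and the $\beta$ $B$-vertices: consecutive elements of an $H$-path must alternate between segments and $B$-vertices (two $B$-vertices are non-adjacent, and two segments cannot be consecutive by maximality), each segment is inserted in its given orientation, matching the point nature of a part-vertex of $K_{k,\beta}$, and the number of resulting $H$-paths equals the number of paths in the bipartite cover. Summing over $k$ with the weight $t^{(\#\text{paths})}$ gives the displayed identity. The careful bookkeeping of orientations and of the alternation and endpoint structure is where I expect the main difficulty.

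\textbf{From the recursion to the product and its algebra.} Since $\pi_{\bar K_\alpha}(t) = t^\alpha$ and $K_{\alpha_1,\dots,\alpha_n} = \bar K_{\alpha_1}\triangledown\cdots\triangledown\bar K_{\alpha_n}$, reading the identity above as $\pi_{G\triangledown\bar K_\beta}(t) = \pi_G(t)\times t^\beta$ (using the bilinear extension of $\times$) and applying it repeatedly yields
\[
\pi_{K_{\alpha_1,\dots,\alpha_n}}(t) = t^{\alpha_1}\times t^{\alpha_2}\times\cdots\times t^{\alpha_n}.
\]
Commutativity of $\times$ on monomials is immediate from $K_{a,b}=K_{b,a}$, and associativity follows because $(t^a\times t^b)\times t^c$ and $t^a\times(t^b\times t^c)$ both equal $\pi_{K_{a,b,c}}(t)$ by two applications of the recursion; bilinearity propagates both properties to all of $\mathbb K[t]$. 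This proves the last equality of the lemma and makes Lemma \ref{iso_prod} applicable.

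\textbf{Identifying the operator.} Lemma \ref{iso_prod} now furnishes an operator $\phi$ with $a\times b = \phi[\phi^{-1}[a]\phi^{-1}[b]]$ and $\phi[t^n] = t^{\times n} = \pi_{K_n}(t)$ (taking $\alpha_1=\cdots=\alpha_n=1$, so that $K_{1,\dots,1}=K_n$). It remains to show $\phi = \phi_\pi := e^{tD^2}$, i.e. the operator identity $e^{tD^2}[t^n] = \pi_{K_n}(t)$, where $tD^2$ denotes the noncommuting composition "apply $D^2$, then multiply by $t$" (write $M_t$ for multiplication by $t$). First, $p_k(K_n)$ equals the unsigned Lah number $L(n,k)=\binom{n-1}{k-1}\tfrac{n!}{k!}$, since a $k$-path cover of $K_n$ is a partition of the vertex set into $k$ nonempty linearly ordered blocks. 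Then, writing $A = M_tD^2$ and using $[D^2,M_t]=2D$, a short Hadamard expansion terminates after three terms, giving $e^{A}M_t e^{-A} = M_t(1+D)^2$; hence $e^{A}[t^{n+1}] = t(1+D)^2\,e^A[t^n]$. Matching this recursion against the Lah-number identity $L(n+1,k) = L(n,k-1) + 2k\,L(n,k) + k(k+1)\,L(n,k+1)$ and inducting on $n$ yields $e^{tD^2}[t^n]=\pi_{K_n}(t)$. Therefore $\phi=\phi_\pi$ and $\phi^{-1}=e^{-tD^2}$, and the middle expression of the lemma follows.
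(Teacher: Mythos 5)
Your proposal is correct, and its overall skeleton is the same as the paper's: prove a recursion for joining with an independent set, deduce commutativity and associativity of $\times$, invoke Lemma \ref{iso_prod} to get a change-of-basis operator with $\phi[t^n]=\pi_{K_n}(t)$, and finally identify $\phi=e^{tD^2}$. Two steps genuinely differ. First, you prove the recursion $\pi_{G\triangledown \bar K_\beta}(t)=\sum_k p_k(G)\,\pi_{K_{k,\beta}}(t)$ for an \emph{arbitrary} graph $G$ via the contraction/alternation bijection, whereas the paper's equation (\ref{pi_fundamental}) states it only for $G$ a complete multipartite graph (with essentially the same attach-the-new-part argument); your stronger version is exactly the combinatorial content of Theorem \ref{path-main}, so your route proves the lemma and that theorem simultaneously, at the cost of the extra bookkeeping about maximal runs and orientations that you correctly carry out. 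Second, for the operator identification the paper settles $e^{tD^2}[t^n]=\pi_{K_n}(t)$ in one line by reading $\frac{1}{k!}(tD^2)^k t^n$ combinatorially as $k$ successive merges of two directed paths into one, starting from $n$ trivial paths; you instead compute with commutators: from $[D^2,M_t]=2D$ the adjoint series for $A=M_tD^2$ terminates (indeed $[A,M_t]=2M_tD$, $[A,[A,M_t]]=2M_tD^2$, and the next bracket vanishes), giving $e^{A}M_te^{-A}=M_t(1+D)^2$, hence the recursion $e^{A}[t^{n+1}]=t(1+D)^2e^{A}[t^n]$, which you match by induction against the insert-a-vertex recursion $L(n+1,k)=L(n,k-1)+2k\,L(n,k)+k(k+1)\,L(n,k+1)$ — all of which checks out (the three cases are: new vertex as a trivial path, appended at one of $2k$ path ends, or splicing an ordered pair of paths). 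Your algebraic verification is longer but fully rigorous where the paper offers only a verbal combinatorial sketch; the paper's version is shorter and more conceptual, and it is worth noting that both you and the paper rely on the construction inside the proof of Lemma \ref{iso_prod} (namely $\phi[t^n]=t^{\times n}$), not merely its stated existence claim.
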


\begin{proof}
    Commutativity is immediate:

    \[
    t ^ {a} \times t ^ b = \pi_{K_{a, b}}(t) = \pi_{K_{b , a}}(t) = t ^ b \times t ^ a.
    \]

    Now, we show the following equality:
    \begin{equation}\label{pi_fundamental}
    \pi_{K_{\alpha_1, \alpha_2, \cdots \alpha_n}} \times t ^ {\alpha_{n + 1}} = \pi_{K_{\alpha_1, \alpha_2, \cdots, \alpha_n + 1}}(t).
    \end{equation}
    Indeed, expansion yields:
    \[
    \pi_{K_{\alpha_1, \alpha_2, \cdots \alpha_n}}(t) \times t ^ {\alpha_{n + 1}} = \left(\sum_{i =0} ^ {\alpha_1 + \cdots + \alpha_n} p_i(K_{\alpha_1, \alpha_2, \cdots \alpha_n}) t ^ i \right) \times t ^ {\alpha_{n + 1}}
    \]
    \[
    =\sum_{i =0} ^ {\alpha_1 + \cdots + \alpha_n} p_i (K_{\alpha_1, \alpha_2, \cdots \alpha_n}) \pi_{K_{i, \alpha_{n + 1}}}(t),
    \]
    which can equivalently be phrased as choosing $i$ directed paths in $K_{\alpha_1, \alpha_2, \cdots, \alpha_n}$ and attaching $\alpha_{n +1 }$ points to their ends. This is the same as the path-cover polynomial of $K_{\alpha_1, \alpha_2, \cdots, \alpha_{n + 1}}$. 
    Hence, we obtain associativity:
    \[
    (t ^ a \times t ^ b) \times t ^ c = \pi_{K_{a, b, c}}(t) = \pi_{K_{b, c, a}}(t) = (t ^ b \times t ^ c) \times t ^ a = t ^ a \times (t ^ b \times t ^ c).
    \]
    Use (\ref{pi_fundamental}) and associativity:
    \[
    t ^ {\alpha_1} \times t ^ {\alpha_2} \times \cdots \times t ^ {\alpha_n} = \pi_{K_{\alpha_1, \alpha_2, \cdots, \alpha_n}}(t).
    \]
    By Lemma \ref{iso_prod}, there exists an operator $\phi_\pi$ such that the following holds:
    \[
    t ^ {\alpha_1} \times t ^ {\alpha_2} \times \cdots \times t ^ {\alpha_n} = \phi_\pi \left[ \phi_\pi^{-1} [t ^ {\alpha_1}] \phi_\pi^{-1} [t ^ {\alpha_2}] \cdots \phi_\pi^{-1} [t ^ {\alpha_n}]\right].
    \]
    Note that
    \[
    \phi_\pi(t ^ n)  = t \times t \times \cdots \times t = \pi_{K_n}(t).
    \]
    Thus, one can verify that $\phi_\pi = e ^ {t D ^ 2}:$
    \[
    e ^ {tD ^ 2} (t ^ n) = \sum_{k \geq 0} \frac{(t D ^ 2) ^ k}{k!} t ^ n = \pi_{K_n}(t),
    \]
    which is indeed true because $\frac{1}{k!}(t D ^ 2) ^ k t ^ n$ is equivalent to recursively taking $2$ paths and uniting them into $1$, given that the initial set of paths consists of $n$ trivial paths.

\end{proof}

Now, we generalize Lemma \ref{path-pre} for any join graph:

\begin{thm} \label{path-main}
    Let $G$ and $H$ be simple graphs. Then, the path-cover polynomial $\pi_{G \triangledown H}(t)$ satisfies:

    \[
    \pi_{G \triangledown H}(t) = \phi_\pi \left[ \phi_\pi ^ {-1}[\pi_G(t)] \phi_\pi ^ {-1}[\pi_H(t)]\right].
    \]
\end{thm}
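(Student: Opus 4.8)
The plan is to reduce the theorem to the single algebraic identity $\pi_{G \triangledown H}(t) = \pi_G(t) \times \pi_H(t)$, where $\times$ is the distributive product of Lemma \ref{path-pre} determined by $t^a \times t^b = \pi_{K_{a,b}}(t)$. Once this identity is in hand, the theorem follows at once: Lemma \ref{path-pre} shows that $\times$ is commutative and associative with associated change-of-basis operator $\phi_\pi = e^{tD^2}$, so Lemma \ref{iso_prod} yields $\pi_G(t) \times \pi_H(t) = \phi_\pi[\phi_\pi^{-1}[\pi_G(t)]\,\phi_\pi^{-1}[\pi_H(t)]]$, which is exactly the claimed formula. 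Thus the entire burden is the combinatorial identity $\pi_{G \triangledown H} = \pi_G \times \pi_H$, which generalizes the edgeless-block computation (\ref{pi_fundamental}) from complete multipartite graphs to arbitrary $G$ and $H$.

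First I would expand the right-hand side by distributivity and the defining relation of $\times$:
\[
\pi_G(t) \times \pi_H(t) = \sum_{i,j} p_i(G)\,p_j(H)\,\pi_{K_{i,j}}(t),
\]
so that proving the theorem amounts to showing that, for each $m$, the coefficient $\sum_{i,j} p_i(G)\,p_j(H)\,p_m(K_{i,j})$ equals $p_m(G \triangledown H)$.

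The core step is a bijection realizing this count. Given an $m$-path cover $\mathcal P$ of $G \triangledown H$, I would split each directed path of $\mathcal P$ at every join edge into its maximal runs of consecutive $G$-vertices and of consecutive $H$-vertices. The $G$-runs form a directed path cover $\mathcal P_G$ of $G$ (say with $i$ paths), the $H$-runs form a directed path cover $\mathcal P_H$ of $H$ (with $j$ paths), and contracting each run to a single node produces a skeleton that is a directed $m$-path cover of $K_{i,j}$: consecutive runs along a path of $\mathcal P$ necessarily alternate type, so the skeleton edges join only $G$-nodes to $H$-nodes, and since every $G$-vertex is adjacent to every $H$-vertex in the join, every such path cover of the complete bipartite graph is realizable. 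Conversely, from a triple $(\mathcal P_G, \mathcal P_H, \mathcal S)$ I would reconstruct $\mathcal P$ by expanding each node of the skeleton $\mathcal S$ into its directed run, traversed start-to-end, and inserting the join edges prescribed by $\mathcal S$. The key observation making this a genuine bijection is that the orientation of a run and its position in the skeleton are independent data: a run is always traversed in its own direction, while the skeleton merely records the order in which blocks appear along each path, so distinct triples yield distinct covers and every cover arises exactly once.

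I expect the orientation bookkeeping to be the main obstacle — one must check that counting directed covers of $G$, of $H$, and of $K_{i,j}$ simultaneously neither over- nor under-counts directed covers of $G \triangledown H$. A small sanity check such as $G = K_2$, $H = K_1$, where $\pi_{K_2}(t) \times \pi_{K_1}(t) = (2t+t^2)\times t = 6t + 6t^2 + t^3 = \pi_{K_3}(t)$, confirms that the two directions of each $G$-path and the two directions of each skeleton path combine without collision. With the bijection established, summing over $i$ and $j$ gives $p_m(G \triangledown H) = \sum_{i,j} p_i(G)\,p_j(H)\,p_m(K_{i,j})$ for every $m$, hence $\pi_{G \triangledown H} = \pi_G \times \pi_H$, and the operator form follows from Lemma \ref{path-pre} as above.
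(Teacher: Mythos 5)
Your proposal is correct and takes essentially the same approach as the paper: the paper's proof likewise reduces everything to the decomposition $\pi_{G \triangledown H}(t) = \sum_{l_1, l_2} p_{l_1}(G)\, p_{l_2}(H)\, \pi_{K_{l_1, l_2}}(t)$, justified by viewing the paths of a cover of $G$ and a cover of $H$ as the vertices of a complete bipartite graph (exactly your run-contraction bijection, stated more tersely), and then applies Lemma \ref{path-pre} together with linearity of $\phi_\pi$ to obtain the operator form. Your write-up simply makes explicit the maximal-run splitting and orientation bookkeeping that the paper leaves implicit.
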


\begin{proof}
    Let $n_1$ and $n_2$ be the number of vertices in $G$ and $H$, respectively. By definition, the coefficient of $t ^ k$ in $\pi_G(t)$ is the number of $k\text{-path covers}$. Let $\mathcal{A}$ be a set of paths in some path-cover of $G$, and $\mathcal{B}$ be a set of paths in some path-cover of $H$. If one thinks of elements of $\mathcal{A}$ and $\mathcal{B}$ as vertices of $K_{|\mathcal{A}|, |\mathcal{B}|}$, then all path-covers of $K_{|\mathcal{A}|, |\mathcal{B}|}$ define path-covers of $G \triangledown H$. Conversely, all path covers of $G \triangledown H$ can be expressed as path covers in $K_{|\mathcal{A}|, |\mathcal{B}|}$ for some path-covers $\mathcal{A}$ in $G$ and $\mathcal{B}$ in $H$. So, the path-cover polynomial of $G \triangledown H$ can be written as
    

    \[
    \pi_{G \triangledown H}(t) = \sum_{l_1 = 0} ^ {n_1} \sum_{l_2 = 0} ^ {n_2} p_{l_1} (G) p_{l_2}(H) \pi_{K_{l_1, l_2}}(t).
    \]
    By Lemma \ref{path-pre}, one has that $\pi_{K_{l_1, l_2}}(t) = t ^ {l_1} \times t^ {l_2} = \phi_\pi \left[ \phi_\pi ^ {-1} [t ^ {l_1}] \phi_\pi ^ {-1} [t ^ {l_2}]\right]$. Thus,

    \[
    \pi_{G \triangledown H}(t) = \sum_{l_1 = 0} ^ {n_1} \sum_{l_2 = 0} ^ {n_2} p_{l_1} (G) p_{l_2}(H) \phi_\pi \left[ \phi_\pi ^ {-1} [t ^ {l_1}] \phi_\pi ^ {-1} [t ^ {l_2}]\right].
    \]

    \[
    = \phi_\pi \left[ \sum_{l_1 = 0} ^ {n_1} \sum_{l_2 = 0} ^ {n_2} p_{l_1} (G) p_{l_2}(H)  \phi_\pi ^ {-1} [t ^ {l_1}] \phi_\pi ^ {-1} [t ^ {l_2}]
    \right]
    \]

    \[
    = \phi_\pi \left[
    \phi^{-1}_\pi\left[\sum_{l_1 = 0} ^ {n_1} p_{l_1} (G)  t ^ {l_1} \right]
    \phi^{-1}_\pi\left[\sum_{l_2 = 0} ^ {n_2} p_{l_2}(H) t ^ {l_2} \right]
    \right]
    \]
    \[
    = \phi_\pi \left[ \phi_\pi ^ {-1}[\pi_G(t)] \phi_\pi ^ {-1}[\pi_H(t)]\right].
    \]
\end{proof}

The reason $\phi_\pi = e ^ {tD^2}$ is because $\phi_\pi \left[ t^n\right] = \pi_{K_n}(t)$, which is true as shown in the proof of Lemma \ref{path-pre}. On a basic level, solving the path-cover polynomial for the complete graph allows us to solve it for any join graph. Thus, one can also rewrite the operator $\phi_\pi$ using Lah numbers:

\[
\phi_\pi \left[t ^ n \right] = \pi_{K_n}(t) = \sum_{k = 1} ^ n L(n, k) t ^ k = \sum_{k = 1} ^ n \frac{n!}{k!} \binom{n - 1}{k - 1} t ^ k,
\]
\[
\phi_{\pi} ^ {-1} \left[t ^ n \right] = \sum_{k = 1} ^ n (-1) ^ {n -k} L(n, k) t ^ k = \sum_{k = 1} ^ {n} (-1) ^ {n - k} \frac{n!}{k!} \binom{n - 1}{k - 1} t ^k,
\]
where $L(n, k)$ is the Lah number. They are the number of $k\text{-path covers}$ in $K_n.$ A well-known fact is that $L(n, k) = \frac{n!}{k!} \binom{n - 1}{k - 1}$.

\subsection{Duality of path-cover polynomials} 

We show that the path-cover polynomial of $G$ can be expressed using the signed path-cover polynomial of its complement $\bar G$:

\begin{thm}\label{path_duality} Let $G$ be a simple graph and $\bar G$ be its complement. Duality theorem for the path-cover polynomials:
    \[
    \pi_G(t) = \phi_\pi\left[\pi_{\bar G}^+(t) \right].
    \]
\end{thm}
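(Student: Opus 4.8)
The plan is to reduce the operator identity to a single numerical coefficient identity and then establish that identity by inclusion--exclusion over the non-edges of $G$. Writing $n = |V(G)|$, I would first expand the right-hand side using the fact, established in the proof of Lemma \ref{path-pre}, that $\phi_\pi[t^k] = \pi_{K_k}(t) = \sum_j L(k,j)\,t^j$. Substituting $\pi_{\bar G}^+(t) = \sum_k (-1)^{n-k}p_k(\bar G)\,t^k$ and reading off the coefficient of $t^j$, the theorem becomes equivalent to
\[
p_j(G) = \sum_{k=j}^{n}(-1)^{n-k}p_k(\bar G)\,L(k,j),
\]
so it suffices to prove this for every $j$.

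To prove it I would count $j$-path covers of $G$ as $j$-path covers of $K_n$ that never use a non-edge of $G$ as a consecutive pair. Viewing a $j$-path cover $P$ of $K_n$ as a partition of $V$ into $j$ directed paths and letting $B(P)$ be the set of $\bar G$-edges occurring as consecutive (undirected) pairs in $P$, one has $p_j(G) = \sum_P [B(P)=\varnothing]$. Applying $\sum_{S\subseteq B(P)}(-1)^{|S|} = [B(P)=\varnothing]$ and interchanging the order of summation gives
\[
p_j(G) = \sum_{S\subseteq E(\bar G)}(-1)^{|S|}\,N(S,j),\qquad N(S,j) = \#\{P : S\subseteq B(P)\}.
\]

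The core computation is $N(S,j)$. I would argue that $N(S,j)=0$ unless $S$, as an undirected subgraph, is a disjoint union of paths (maximum degree at most $2$ and acyclic), since otherwise its edges cannot all be consecutive in one path cover. When $S$ is such a union of paths, contracting each maximal $S$-path to a super-vertex leaves $n-|S|$ super-vertices, and every $P$ containing $S$ arises uniquely from a $j$-path cover of $K_{\,n-|S|}$ on the super-vertices together with an independent orientation of each of the $d(S)$ nontrivial $S$-paths; hence $N(S,j) = 2^{d(S)}L(n-|S|,j)$. Re-indexing by the number of components $k=n-|S|$ yields
\[
p_j(G) = \sum_{k=j}^{n}(-1)^{n-k}L(k,j)\Big(\sum_{S:\,|S|=n-k}2^{d(S)}\Big),
\]
and it remains to recognize the inner weighted sum as $p_k(\bar G)$: each undirected disjoint-path subgraph of $\bar G$ with $k$ components lifts to exactly $2^{d(S)}$ directed $k$-path covers of $\bar G$ by orienting its nontrivial components, giving $\sum_{S:|S|=n-k}2^{d(S)} = p_k(\bar G)$ and hence the claimed identity.

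The step I expect to be the main obstacle is the orientation bookkeeping in evaluating $N(S,j)$: one must verify that forcing the individual edges of an $S$-path to be consecutive forces the whole path to appear as a contiguous block (so that contraction is well defined), and that the only remaining freedom is a single two-fold orientation per nontrivial component. The factor $2^{d(S)}$ produced here is precisely the multiplicity needed so that $\sum_S 2^{d(S)}$ reconstructs the \emph{directed} count $p_k(\bar G)$, and matching these two appearances of $2^{d(S)}$ is the delicate point. As a consistency check I would note that $\pi^+$ is multiplicative under disjoint union while $\phi_\pi^{-1}[\pi_{(-)}]$ is multiplicative under join (the latter directly from Theorem \ref{path-main}); since complementation exchanges union and join, the identity propagates through both operations and is manifest on cographs, corroborating the general count.
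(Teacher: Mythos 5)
Your proposal is correct and takes essentially the same route as the paper: after the reindexing $k \mapsto n-j$, your coefficient identity $p_j(G)=\sum_{k}(-1)^{n-k}p_k(\bar G)\,L(k,j)$ is exactly the paper's equation (\ref{path_inclusion-exclusion}) (since $L(k,j)=p_j(K_k)$), which the paper likewise proves by inclusion--exclusion over non-edges and then converts to the operator statement via $\phi_\pi[t^k]=\pi_{K_k}(t)$. The only difference is bookkeeping: the paper marks directed sub-path-covers of $\bar G$ directly, so your orientation factor $2^{d(S)}$ is absorbed into $p_{n-j}(\bar G)$ from the start, and your contraction/orientation argument is precisely the detail the paper compresses into its one-sentence justification of (\ref{path_inclusion-exclusion}).
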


\begin{proof}
    First, note that the number of $k\text{-path covers}$ $p_K(G)$ can be expressed as follows:
    \begin{equation}\label{path_inclusion-exclusion}
        p_k(G) = \sum_{j = 0} ^ {n - 1} (-1) ^ j p_k(K_{n - j}) p_{n - j}(\bar G).
    \end{equation}
    Indeed, because $p_k(K_{n - j}) p_{n - j}(\bar G)$ chooses all $k\text{-path covers}$ such that at least $j$ edges are not in $G$, by the inclusion-exclusion principle, the equality holds.

    Expand $\phi_\pi\left[\pi_{\bar G}^+(t) \right]$:

    \[
    \phi_\pi\left[\pi_{\bar G}^+(t)\right] = \sum_{k = 1} ^ n (-1) ^ {n - k} p_k(\bar G) \phi_\pi \left[ t ^ k\right] =
    \]
    \[
    \sum_{k = 1} ^ n (-1) ^ {n - k} p_k(\bar G)  \sum_{j = 1} ^ {k} p_j(K_k) t ^ j =
    \]
    \[
    \sum_{k = 1} ^ n t ^ k \sum_{j = 1} ^ n p_k(K_j) p_j(\bar G) (-1) ^ {n - j} = 
    \]
    \[
    \sum_{k = 1} ^ n t ^ k \sum_{j = 0} ^ {n - 1} (-1) ^ j p_k(K_{n - j}) p_{n - j}(\bar G).
    \]
    By (\ref{path_inclusion-exclusion}), one obtains
    \[
    \phi_\pi\left[\pi_{\bar G}^+(t)\right] = \sum_{k = 1} ^ n t ^ kp_k(G) = \pi_G(t).
    \]
\end{proof}

We give a second proof of Theorem \ref{path-main} using the duality theorem.

\begin{proof}[The second proof of Theorem \ref{path-main}]
\[
\phi_{G \triangledown H}(t) = \pi_{\overline{\bar G \cup \bar H}}(t) = \phi_\pi\left[\pi_{\bar G \cup \bar H} ^ {+}(t)\right]
\]
\[
= \phi_\pi\left[\pi^+_{\bar G}(t) \pi^+_{\bar H}(t)\right] = \phi_\pi \left[ \phi_\pi ^ {-1}\left[ \pi_G(t) \right] \left[ \pi_H(t) \right] \right].
\]
\end{proof}

\subsection{The associated Laguerre polynomials}

Define the measure $\mu$ such that \(d\mu(t) = t ^ {-1} e ^ {-t} dt\) on $[0, \infty)$. Throughout this section, we consider the polynomials without a constant term. For polynomials $f(t)$, define the Lebesgue–Stieltjes integral represented by the measure $\mu$:

\[
\mathcal L[f(t)] = \int_{0} ^ {\infty} f(t) d\mu(t)
\]
\[
= \int_0 ^ {\infty} t ^ {-1} e ^ {-t} f(t) dt.
\]

Define the inner product induced by the measure $\mu$ for polynomials $f(t)$ and $g(t)$:

\[
\langle f, g \rangle = \mathcal L \left[ f(t)g(t) \right]
\]
\[
= \int_0^{\infty} t ^ {-1} e ^ {-t} f(t) g(t) dt.
\]

Define the associated Laguerre polynomials $L_n^{(-1)}(t)$ as follows:

\[
L_n^{(-1)}(t) = \frac{(-1)^n}{n!} e ^ {-t D^2} \left[ t ^ n\right]
\]

It is well-known that the associated Laguerre polynomials $L_n^{(-1)}(t)$ are orthogonal under the inner product induced by $\mu$. The associated Laguerre polynomials can be expressed as the signed path-cover polynomials of complete graphs:

\[
(-1) ^ n n! L_n^{(-1)}(t) = e^{-tD^2}\left[t ^ n\right]=
\pi_{K_n}^+(t).
\]

So, signed path cover polynomials of complete graphs are orthogonal. Moreover, signed path-cover polynomials are a generalization of the associated Laguerre polynomials $L_n^{(-1)}(t)$. The Lebesgue-Stieltjes integral satisfies the following:

\begin{thm}\label{cycle_characteristic} Let $G$ be a simple graph. Denote $c(G)$ as the number of directed Hamiltonian cycles in $G$. The Lebesgue–Stieltjes integral represented by the measure $\mu$ of a path-cover polynomial:

\[
\mathcal L \left[ \pi_{\bar G} ^ +(t)\right] = c(G) + (-1) ^ {n - 1} c(\bar G).
\]

\end{thm}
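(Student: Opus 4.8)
The plan is to evaluate the integral termwise and then reinterpret the resulting alternating sum combinatorially as an inclusion--exclusion count of directed Hamiltonian cycles. First I would record that the measure acts on monomials by
\[
\mathcal L[t^k] = \int_0^\infty t^{k-1} e^{-t}\, dt = \Gamma(k) = (k-1)!,
\]
so that, expanding $\pi_{\bar G}^+(t) = \sum_{k=1}^n (-1)^{n-k} p_k(\bar G) t^k$ and using linearity,
\[
\mathcal L\bigl[\pi_{\bar G}^+(t)\bigr] = \sum_{k=1}^n (-1)^{n-k} p_k(\bar G)\,(k-1)!.
\]
The key observation driving everything else is that $(k-1)!$ is exactly the number of ways to arrange $k$ labeled objects into a single directed cycle; this is what lets me glue a path cover of $\bar G$ into a Hamiltonian cycle of the full vertex set.

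Next I would run inclusion--exclusion with the set of all directed Hamiltonian cycles of $K_n$ as the universe (there are $(n-1)!$ of them) and with the forbidden arcs being the arcs of $\bar G$. A Hamiltonian cycle of $K_n$ uses only arcs of $G$ precisely when it avoids every arc of $\bar G$, so the number of such cycles is $c(G)$. Inclusion--exclusion over arc subsets $S$ of $\bar G$ gives
\[
c(G) = \sum_{S} (-1)^{|S|}\, \bigl|\{\text{Hamiltonian cycles of } K_n \text{ containing all arcs of } S\}\bigr|,
\]
where only those $S$ that embed in some Hamiltonian cycle contribute. If $S$ is a set of $j$ arcs forming vertex-disjoint directed paths, then contracting each path (and treating each untouched vertex as a trivial path) leaves $n-j$ components, which can be closed into a directed Hamiltonian cycle in $(n-j-1)!$ ways. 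Moreover these arc sets are in bijection with the $(n-j)$-path covers of $\bar G$, so there are $p_{n-j}(\bar G)$ of them; reindexing by $k = n-j$ turns the disjoint-path part of the sum into exactly $\sum_{k=1}^n (-1)^{n-k} p_k(\bar G)(k-1)!$, matching the expression for $\mathcal L[\pi_{\bar G}^+(t)]$.

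The delicate point, and the step I expect to be the main obstacle, is the boundary case that the path-cover picture silently omits. A path cover uses at most $n-1$ arcs, whereas an embeddable arc set $S$ can also be a \emph{full} directed Hamiltonian cycle of $\bar G$, using all $n$ arcs; for such $S$ the formula $(n-j-1)!$ degenerates at $j=n$, and instead the count equals $1$. These configurations are precisely the directed Hamiltonian cycles of $\bar G$, and they contribute $(-1)^n c(\bar G)$ to the inclusion--exclusion. Isolating this term gives
\[
c(G) = \sum_{k=1}^n (-1)^{n-k} p_k(\bar G)\,(k-1)! + (-1)^n c(\bar G),
\]
and solving for the alternating sum yields $\mathcal L[\pi_{\bar G}^+(t)] = c(G) + (-1)^{n-1} c(\bar G)$, as claimed. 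I would close by sanity-checking the extreme cases $G = K_n$ and $G = \overline{K_n}$, where both sides reduce to easily verified factorial expressions, to confirm the sign of the correction term.
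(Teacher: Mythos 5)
Your proposal is correct and follows essentially the same route as the paper's proof: both evaluate $\mathcal L[t^k]=(k-1)!$ termwise and then establish the identity $c(G)=\sum_{k}(-1)^{n-k}p_k(\bar G)(k-1)!+(-1)^n c(\bar G)$ by inclusion--exclusion over arc sets of $\bar G$, where disjoint directed path systems correspond to path covers completed in $(n-j-1)!=c(K_{n-j})$ ways and the degenerate full-cycle case supplies the $(-1)^n c(\bar G)$ correction. Your write-up merely makes explicit the embeddability and boundary-case details that the paper asserts more briefly.
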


\begin{proof}
    It is evident that
    \[\left. \pi^+_{\bar G}(t) \right|_{t ^ k \rightarrow (k - 1)!} = \sum_{k = 0} ^ {n - 1} (-1) ^ k ( n -k - 1)! p_{n - k}(\bar G).
    \]
    The number of directed Hamiltonian cycles in $G$ can be expressed using the inclusion-exclusion principle:

    \[
    c(G) = \sum_{k = 0} ^ {n - 1} (-1) ^ k c(K_{n - k}) p_{n - k} (\bar G) + (-1) ^ n c(\bar G).
    \]
    Indeed, $c(K_{n - k}) p_{n - k} (\bar G)$ can be interpreted as counting cycles with at least $k$ edges that are not in $G$. The last term is the number of cycles that have $n$ edges that are not in $G$. Hence, by the inclusion-exclusion principle, equality holds.

    Because $c(K_{n - k}) = (n - k -1)!$, one obtains the equality:

    \[
    c(G) = \left. \pi^+_{\bar G}(t) \right|_{t ^ k \rightarrow (k - 1)!} + (-1) ^ n c(\bar G).
    \]
    Rearrange terms
    \[
    \left. \pi^+_{\bar G}(t) \right|_{t ^ k \rightarrow (k - 1)!} = c(G) + (-1) ^ {n - 1} c(\bar G).
    \]
    By $\pi^+_{\bar G}(t) = (-1) ^ n \pi_{\bar{G}}(-t)$, we get
    \[
    \left. \pi_{\bar G}(t) \right|_{t ^ k \rightarrow (-1) ^ {k - 1}(k - 1)!} = (-1) ^ {n - 1} c(G) + c(\bar G).
    \]
    This is the same as 
    \[
    \left. \pi_{G}(t) \right|_{t ^ k \rightarrow (-1) ^ {k - 1}(k - 1)!} = c(G) + (-1) ^ {n - 1} c(\bar G).
    \]
    Equivalently, one can write the equality as follows:
    \[
    \int_0 ^ {\infty} t ^ {-1} e ^ {-t} \pi_{\bar G} ^ +(t) dt = c(G) + (-1) ^ {n - 1} c(\bar G) = \mathcal L \left[ \pi_{\bar G} ^ +(t)\right].
    \]
\end{proof}

For a simple graph $G$, if one assumes its complement $\bar G$ is not Hamiltonian, it is possible to express the number of Hamiltonian cycles in $G$:

\begin{crl}\label{hc_char_0}
    If $\bar G$ is not Hamiltonian, the number of directed Hamiltonian cycles:

    \[
    c(G) = \mathcal L \left[ \pi^+_{\bar G}(t)\right]
    \]
    \[
    =\pi_G(t) |_{t ^ k \rightarrow (-1) ^ {k - 1} (k - 1)!}.
    \]
\end{crl}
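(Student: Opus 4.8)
The plan is to read off the result as a direct specialization of Theorem \ref{cycle_characteristic}. The hypothesis that $\bar G$ is not Hamiltonian means precisely that $\bar G$ contains no directed Hamiltonian cycle, i.e.\ $c(\bar G) = 0$. Substituting this into the identity
\[
\mathcal L\left[\pi_{\bar G}^+(t)\right] = c(G) + (-1)^{n-1} c(\bar G)
\]
established in Theorem \ref{cycle_characteristic} immediately yields $\mathcal L\left[\pi_{\bar G}^+(t)\right] = c(G)$, which is the first claimed equality.

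For the second equality I would first record the elementary moment computation
\[
\mathcal L[t^k] = \int_0^\infty t^{-1} e^{-t} t^k \, dt = \int_0^\infty t^{k-1} e^{-t}\, dt = \Gamma(k) = (k-1)!, \qquad k \ge 1,
\]
which is legitimate because we restrict to polynomials without constant term, so no divergent $k=0$ moment appears. This identifies the Lebesgue--Stieltjes integral with the substitution $t^k \mapsto (k-1)!$. The chain of identities inside the proof of Theorem \ref{cycle_characteristic} already converts the integral statement into
\[
\left.\pi_G(t)\right|_{t^k \rightarrow (-1)^{k-1}(k-1)!} = c(G) + (-1)^{n-1} c(\bar G),
\]
so setting $c(\bar G) = 0$ once more gives $\left.\pi_G(t)\right|_{t^k \rightarrow (-1)^{k-1}(k-1)!} = c(G)$, completing the proof.

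Since every analytic and combinatorial ingredient is already supplied by Theorem \ref{cycle_characteristic} and its proof, there is no genuine obstacle here. The only point deserving an explicit line of justification is the translation of the graph-theoretic hypothesis ``$\bar G$ is not Hamiltonian'' into the algebraic vanishing $c(\bar G) = 0$; once that observation is made, both displayed equalities of the corollary follow as immediate specializations, and no further computation is required.
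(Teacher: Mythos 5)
Your proposal is correct and matches the paper's intended argument exactly: the corollary is stated as an immediate consequence of Theorem \ref{cycle_characteristic}, obtained by observing that ``$\bar G$ not Hamiltonian'' means $c(\bar G)=0$ and specializing both the integral identity and the substitution identity $\left.\pi_G(t)\right|_{t^k \rightarrow (-1)^{k-1}(k-1)!} = c(G) + (-1)^{n-1}c(\bar G)$ from the theorem's proof. Your explicit verification that $\mathcal L[t^k] = \Gamma(k) = (k-1)!$ for $k \ge 1$ is a welcome extra detail the paper leaves implicit, but the route is the same.
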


\textbf{Example.} Let $G$ be the Petersen graph. The signed path-cover polynomial is 
\(
\pi^+_G(t) = -240 t + 3120 t^2 - 11160 t^3 + 18280 t^4 - 15912 t^5 + 7860 t^6 - 
 2240 t^7 + 360 t^8 - 30 t^9 + t^{10}.
\) The Petersen graph is not Hamiltonian, so applying Corollary \ref{hc_char_0} gives that there are $6432$ directed Hamiltonian cycles in $\bar G$. The complement of the Petersen graph is also the line graph of the complete graph with five vertices $L(K_5).$

\begin{figure}[H]
\centering
\begin{tikzpicture}[scale=1.2,
  every node/.style={circle, fill=black, inner sep=1.2pt}
]

\node (6)  at (90:2) {};
\node (7)  at (18:2) {};
\node (8)  at (306:2) {};
\node (9)  at (234:2) {};
\node (10) at (162:2) {};

\node (1) at (90:1) {};
\node (2) at (18:1) {};
\node (3) at (306:1) {};
\node (4) at (234:1) {};
\node (5) at (162:1) {};

\draw
(1)--(2) (1)--(5) (1)--(7) (1)--(8) (1)--(9) (1)--(10)
(2)--(3) (2)--(6) (2)--(8) (2)--(9) (2)--(10)
(3)--(4) (3)--(6) (3)--(7) (3)--(9) (3)--(10)
(4)--(5) (4)--(6) (4)--(7) (4)--(8) (4)--(10)
(5)--(6) (5)--(7) (5)--(8) (5)--(9)
(6)--(8) (6)--(9)
(7)--(9) (7)--(10)
(8)--(10);

\end{tikzpicture}
\caption{Complement of the Petersen graph $L(K_5)$.}
\end{figure}
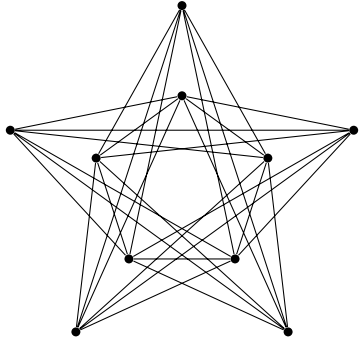

Paley graph of order $q$ with $q$ a prime power and $q \equiv 1 (\text{mod $4$})$ is a simple graph with $q$ vertices in which two vertices are adjacent if their difference is a quadratic residue in $\mathbb{Z}_q$. Paley graphs are self-complementary, which means that Theorem \ref{cycle_characteristic} gives twice the number of directed Hamiltonian cycles in a Paley graph. A directed Hamiltonian cycle in a Paley graph can be interpreted as a circular permutation of $\{ 1, 2, \cdots, q\}$ in which the difference of two adjacent numbers is a quadratic residue. 

Let $G$ be a Paley graph of order $9$. Then, the signed path-cover polynomial $\pi^+_G(t) = 1512 t - 11736 t^2 + 26952 t^3 - 26640 t^4 + 13248 t^5 - 3528 t^6 + 504 t^7 - 36 t^8 + t^9$. Using Theorem \ref{cycle_characteristic}, one obtains that there are $96$ directed Hamiltonian cycles.

\begin{figure}[H]
    \centering
    \begin{tikzpicture}[scale=1,
      every node/.style={circle, fill=black, inner sep=1.2pt}
    ]
        \node (3) at (90:2) {};
        \node (4) at (30:2) {};
        \node (5) at (-30:2) {};
        \node (2) at (-90:2) {};
        \node (7) at (-150:2) {};
        \node (9) at (150:2) {};
    
        \node (6) at (90:0.8) {};
        \node (1) at (210:0.8) {};
        \node (8) at (-30:0.8) {};
    
        \draw (1) -- (2);
        \draw (1) -- (3);
        \draw (1) -- (4);
        \draw (1) -- (7);
    
        \draw (2) -- (5);
        \draw (2) -- (7);
        \draw (2) -- (8);
    
        \draw (3) -- (4);
        \draw (3) -- (8);
        \draw (3) -- (9);
    
        \draw (4) -- (5);
        \draw (4) -- (6);
    
        \draw (5) -- (6);
        \draw (5) -- (8);
    
        \draw (6) -- (7);
        \draw (6) -- (9);
    
        \draw (7) -- (9);
    
        \draw (8) -- (9);
    \end{tikzpicture}
    \caption{Paley graph of order $9$.}
\end{figure}
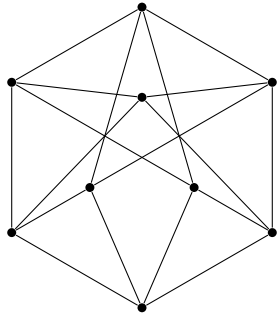

It is also possible to compute $c(G \triangledown H)$ using Corollary \ref{hc_char_0} because the complement of a join graph is disconnected. This gives a nice interpretation for the inner product:

\begin{thm}\label{laguerre_inner} For simple graphs $G$ and $H$, the following holds:

\[
\langle \pi_{\bar G} ^ +, \pi_{\bar H} ^ + \rangle = c(G\triangledown H).
\]
\end{thm}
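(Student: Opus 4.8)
The plan is to reduce the inner product to a single Lebesgue–Stieltjes integral of a signed path-cover polynomial and then invoke Theorem \ref{cycle_characteristic}. By the definition of the inner product,
\[
\langle \pi_{\bar G}^+, \pi_{\bar H}^+ \rangle = \mathcal L\left[\pi_{\bar G}^+(t)\,\pi_{\bar H}^+(t)\right],
\]
so the first task is to identify the product $\pi_{\bar G}^+(t)\,\pi_{\bar H}^+(t)$ with the signed path-cover polynomial of a single graph. First I would establish multiplicativity of the signed path-cover polynomial under disjoint union, namely $\pi_{\bar G}^+(t)\,\pi_{\bar H}^+(t) = \pi_{\bar G \cup \bar H}^+(t)$ — the identity already used in the second proof of Theorem \ref{path-main}. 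To justify it directly, observe that every $k$-path cover of a disjoint union $\bar G \cup \bar H$ splits uniquely into a $k_1$-path cover of $\bar G$ and a $k_2$-path cover of $\bar H$ with $k_1 + k_2 = k$, so the unsigned counts convolve: $p_k(\bar G \cup \bar H) = \sum_{k_1 + k_2 = k} p_{k_1}(\bar G)\,p_{k_2}(\bar H)$. Writing $n_1, n_2$ for the orders of $\bar G, \bar H$ and $n = n_1 + n_2$, the sign factor is compatible because $(-1)^{n - (k_1 + k_2)} = (-1)^{n_1 - k_1}(-1)^{n_2 - k_2}$, which is exactly what lets the signed polynomials multiply rather than merely convolve the unsigned coefficients.

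Next I would use the complementation identity $\bar G \cup \bar H = \overline{G \triangledown H}$, that the complement of a join is the disjoint union of the complements. Setting $F = G \triangledown H$, this yields
\[
\langle \pi_{\bar G}^+, \pi_{\bar H}^+ \rangle = \mathcal L\left[\pi_{\bar F}^+(t)\right],
\]
at which point Theorem \ref{cycle_characteristic} applies directly and gives $\mathcal L[\pi_{\bar F}^+(t)] = c(F) + (-1)^{n-1} c(\bar F)$, where $n$ is the common number of vertices of $F$ and $\bar F$.

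Finally I would dispose of the second term. Since $G$ and $H$ are nonempty, $\bar F = \bar G \cup \bar H$ is a disjoint union of two nonempty graphs and is therefore disconnected; a disconnected graph on at least two vertices admits no Hamiltonian cycle, so $c(\bar F) = 0$. (Equivalently, one may simply cite Corollary \ref{hc_char_0}, whose hypothesis that $\bar F$ is not Hamiltonian is met.) This leaves $\langle \pi_{\bar G}^+, \pi_{\bar H}^+ \rangle = c(F) = c(G \triangledown H)$, as claimed.

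I expect the only genuinely delicate point to be the sign bookkeeping in the multiplicativity step; once the exponents $n - k$ are seen to split additively across the two components, the remainder is a direct application of Theorem \ref{cycle_characteristic} together with the elementary observation that a disconnected graph is never Hamiltonian.
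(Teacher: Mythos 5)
Your proposal is correct and follows essentially the same route as the paper's proof: factor the product as $\pi_{\bar G}^+(t)\,\pi_{\bar H}^+(t)=\pi_{\bar G\cup\bar H}^+(t)=\pi_{\overline{G\triangledown H}}^+(t)$ and apply Theorem \ref{cycle_characteristic}. You merely spell out two steps the paper leaves implicit --- the sign bookkeeping in the multiplicativity of $\pi^+$ under disjoint union, and the vanishing of $c(\overline{G\triangledown H})$ because the complement of a join is disconnected --- both of which are handled correctly.
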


\begin{proof}
    By the definition of the inner product,
    \[
    \langle \pi^{+}_{\bar G}, \pi^{+}_{\bar H} \rangle = \int_0 ^ \infty x^{-1} e ^ {-x} (\pi^{+}_{\bar G}(x)\pi^{+}_{\bar H}(x)) dx=
    \]
    
    \[\int_0 ^ \infty x^{-1} e ^ {-x} (\pi^{+}_{\bar G \cup \bar H}(x)) dx = \int_0 ^ \infty x ^ {-1} e ^ {-x} (\pi^{+}_{\overline{ G \triangledown H}}(x)).
    \]
    By Theorem \ref{cycle_characteristic}, one obtains that
    \[
    \langle \pi^{+}_{\bar G}, \pi^{+}_{\bar H} \rangle = c(G \triangledown H).
    \]
\end{proof}

The induced norm $\pi_{\bar G} ^ +(t)$ is the number of directed Hamiltonian cycles in $G \triangledown G$.

\begin{crl} Define the norm induced by the inner product

\[
\left\Vert f \right\Vert = \sqrt{\langle f, f \rangle}.
\]

The norm of a polynomial $\pi_{\bar G} ^ +(t)$:

\[
\left\Vert \pi_{\bar G} ^ + \right\Vert = \sqrt{c(G \triangledown G)}.
\]
\end{crl}

This leads to the following inequality between the number of directed Hamiltonian cycles:

\begin{crl} Let $G$ and $H$ be simple graphs. The Cauchy-Schwarz inequality for the number of directed Hamiltonian cycles:

    \[
    c(G \triangledown H) ^ 2 \leq c(G \triangledown G) c(H \triangledown H).
    \]
    
\end{crl}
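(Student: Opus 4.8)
The plan is to recognize this inequality as nothing more than the classical Cauchy–Schwarz inequality for the inner product $\langle \cdot, \cdot \rangle$ induced by the measure $\mu$, transported through the dictionary supplied by Theorem \ref{laguerre_inner}. That theorem gives $c(G \triangledown H) = \langle \pi_{\bar G}^+, \pi_{\bar H}^+ \rangle$, and applying it with $H = G$ and with $G = H$ respectively yields $c(G \triangledown G) = \langle \pi_{\bar G}^+, \pi_{\bar G}^+ \rangle = \left\Vert \pi_{\bar G}^+\right\Vert^2$ and $c(H \triangledown H) = \left\Vert \pi_{\bar H}^+\right\Vert^2$. Under this translation the claimed inequality becomes precisely
\[
\langle \pi_{\bar G}^+, \pi_{\bar H}^+ \rangle^2 \leq \left\Vert \pi_{\bar G}^+\right\Vert^2 \left\Vert \pi_{\bar H}^+\right\Vert^2,
\]
so the content reduces to verifying that $\langle \cdot, \cdot \rangle$ is a genuine (symmetric, bilinear, positive semidefinite) inner product on the space containing $\pi_{\bar G}^+$ and $\pi_{\bar H}^+$.

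The verification is the one point needing care, and it is mild. Symmetry and bilinearity are immediate from the integral definition. For positivity, the weight $t^{-1} e^{-t}$ is strictly positive on $(0, \infty)$, so $\langle f, f \rangle = \int_0^\infty t^{-1} e^{-t} f(t)^2\, dt \geq 0$ whenever the integral converges. Convergence is exactly why the section restricts to polynomials without a constant term: both $\pi_{\bar G}^+(t) = \sum_{k=1}^n (-1)^{n-k} p_k(\bar G) t^k$ and $\pi_{\bar H}^+(t)$ have their sums beginning at $k = 1$, so each vanishes at $t = 0$ and cancels the $t^{-1}$ singularity, leaving an integrand that decays exponentially at infinity. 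Thus $\langle \cdot, \cdot \rangle$ restricts to a bona fide positive semidefinite form on this subspace, and in fact it is positive definite there since a continuous polynomial integrating to zero against a positive weight must vanish identically.

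With the inner product in hand, the inequality follows from the textbook argument: for every real $\lambda$ one has $0 \leq \langle f - \lambda g,\, f - \lambda g \rangle = \left\Vert f\right\Vert^2 - 2\lambda \langle f, g\rangle + \lambda^2 \left\Vert g\right\Vert^2$, and a quadratic in $\lambda$ that is nonnegative for all $\lambda$ must have nonpositive discriminant, giving $\langle f, g\rangle^2 \leq \left\Vert f\right\Vert^2 \left\Vert g\right\Vert^2$ (the degenerate case $\left\Vert g\right\Vert = 0$ being handled directly). Setting $f = \pi_{\bar G}^+$ and $g = \pi_{\bar H}^+$ and reading the three pairings back as Hamiltonian-cycle counts via Theorem \ref{laguerre_inner} produces $c(G \triangledown H)^2 \leq c(G \triangledown G)\, c(H \triangledown H)$, as desired. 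I do not anticipate a substantive obstacle here; the only thing worth stating explicitly is the convergence/positivity check that legitimizes calling $\langle \cdot, \cdot \rangle$ an inner product, after which Cauchy–Schwarz is automatic.
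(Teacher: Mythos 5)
Your proposal is correct and matches the paper's intended argument: the paper states this corollary without proof as an immediate consequence of Theorem \ref{laguerre_inner} (and the preceding norm corollary) combined with the classical Cauchy--Schwarz inequality, which is exactly your route. Your added verification that the form is a genuine positive semidefinite inner product on polynomials without constant term (convergence at $t=0$, positivity of the weight) is a point the paper leaves implicit, and it is carried out correctly.
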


Another way to compute the number of directed Hamiltonian cycles in $G \triangledown H$ involves the unsigned polynomials $\pi_G(t)$ and $\pi_H(t)$:

\begin{prp}\label{HC}

Suppose $G$ and $H$ are simple graphs such that $|V(G)| \geq 2$. Then, the number of directed Hamiltonian cycles in $G \triangledown H$:

\[
\sum_{k \geq 1} k! (k - 1)! \left( \left[ t ^ k\right] \pi_G(t) \right) \left( \left[ t ^ k\right] \pi_H(t) \right).
\]

\end{prp}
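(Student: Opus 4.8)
The plan is to count directed Hamiltonian cycles in $G \triangledown H$ directly by decomposing each such cycle according to how it interacts with the bipartition induced by $V(G)$ and $V(H)$. A directed Hamiltonian cycle of $G \triangledown H$ visits all of $V(G) \cup V(H)$. Restricting the cycle to the edges lying inside $G$ (respectively inside $H$) breaks it into a collection of directed paths covering $V(G)$ (respectively $V(H)$), where every edge used to enter or leave a block is one of the "cross" edges of the join. The key observation is that as we traverse the cycle, the maximal runs within $V(G)$ and within $V(H)$ must strictly alternate, since every vertex of $V(G)$ is adjacent to every vertex of $V(H)$ and vice versa. Hence if the cycle crosses between the two sides, it does so an even number of times, say $2k$, producing exactly $k$ directed paths covering $V(G)$ and $k$ directed paths covering $V(H)$.

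First I would fix $k \geq 1$ and count the cycles that induce exactly a $k$-path cover on each side. Choose a $k$-path cover of $G$: there are $[t^k]\pi_G(t) = p_k(G)$ such choices, and likewise $p_k(H) = [t^k]\pi_H(t)$ choices on the $H$ side; here the paths are already directed, matching the directedness of the path-cover polynomial. Next I would count the number of ways to interleave these $2k$ directed paths into a single directed cycle. Think of the $k$ $G$-paths and $k$ $H$-paths as $2k$ oriented "super-vertices" that must be arranged cyclically so that $G$-blocks and $H$-blocks alternate, connecting consecutive blocks by cross edges (which always exist in the join). This is a standard circular-arrangement count.

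The main obstacle, and the step deserving the most care, is getting this alternating-arrangement factor exactly right, including the correct treatment of orientation and of the cyclic symmetry. I would argue as follows: fix the cyclic order of the $k$ directed $G$-paths around the cycle; since the cycle is directed and the $G$-blocks appear in a definite cyclic sequence, there are $(k-1)!$ distinct cyclic orderings of the $G$-paths (we mod out by the rotation that the single cycle already identifies). Into the $k$ gaps between consecutive $G$-blocks we insert the $k$ directed $H$-paths in one of $k!$ linear orders. Each inserted $H$-path may be traversed in the direction dictated by the cycle; because the path-cover polynomial already counts directed paths, the orientation of each block is fixed once the path is chosen, so no extra factor of $2$ appears. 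This yields the factor $(k-1)! \, k!$, and I would double-check the edge cases — in particular the hypothesis $|V(G)| \geq 2$, which rules out the degenerate $k=1$ situation where a single $G$-path and single $H$-path could otherwise be glued into a "cycle" using the same cross pair twice (a multigraph artifact), ensuring every cross edge used is genuinely distinct.

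Summing over $k$ then gives
\[
c(G \triangledown H) = \sum_{k \geq 1} (k-1)!\,k!\,\bigl([t^k]\pi_G(t)\bigr)\bigl([t^k]\pi_H(t)\bigr),
\]
as claimed. As a consistency check I would verify this against Theorem \ref{laguerre_inner}: since $\langle \pi_{\bar G}^+, \pi_{\bar H}^+\rangle = c(G \triangledown H)$ and the inner product sends $t^k$ against $t^k$ to $\int_0^\infty t^{-1}e^{-t} t^{2k}\,dt = (2k-1)!$, one must reconcile the Laguerre computation with the combinatorial factor $(k-1)!\,k!$ obtained here; confirming these two routes agree provides strong validation of the arrangement count and of the sign bookkeeping in $\pi_{\bar G}^+$.
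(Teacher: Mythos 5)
Your proposal is correct and follows essentially the same route as the paper: both decompose a directed Hamiltonian cycle of $G \triangledown H$ into the $k$-path covers it induces on each side and then count the ways to interleave the $2k$ directed paths into an alternating cycle, yielding the factor $k!\,(k-1)!$ (the number of directed Hamiltonian cycles of $K_{k,k}$, which the paper simply cites while you derive it via the cyclic-arrangement count $(k-1)!\cdot k!$). Your explicit treatment of the $k=1$ case and the role of the hypothesis $|V(G)| \geq 2$ is in fact slightly more careful than the paper's, which appeals to the fact that $K_{a,b}$ is Hamiltonian only for $a=b\geq 2$ even though the $k=1$ term corresponds to a contracted $K_{1,1}$ whose lift is a genuine cycle precisely when the $G$-block is nontrivial.
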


\begin{proof}
    It is well-known that a complete bipartite graph $K_{a, b}$ has a Hamiltonian cycle iff $a = b \geq 2$, in which case there are $a(a - 1)!$ directed Hamiltonian cycles. Suppose $\mathcal A$ is a path-cover of $G$ and $\mathcal B$ is a path-cover of $H$. If one thinks of elements of $\mathcal{A}$ and $\mathcal{B}$ as vertices of $K_{|\mathcal{A}|, |\mathcal{B}|}$, then all Hamiltonian cycles of $K_{|\mathcal{A}|, |\mathcal{B}|}$ define Hamiltonian cycles of $G \triangledown H$. Conversely, Hamiltonian cycles of $G \triangledown H$ can be expressed as Hamiltonian cycles in $K_{|\mathcal{A}|, |\mathcal{B}|}$ for some path-covers $\mathcal{A}$ in $G$ and $\mathcal{B}$ in $H$. Thus, the number of directed Hamiltonian cycles in $G \triangledown H$:
    

    \[
    \sum_{k \geq 1} k! (k - 1)! \left( \left[ t ^ k\right] \pi_G(t) \right) \left( \left[ t ^ k\right] \pi_H(t) \right).
    \]
\end{proof}

The number of directed Hamiltonian paths can be computed using the inner product as follows:

\begin{prp}\label{ham_integral}
    Let $G$ be a simple graph. The number of directed Hamiltonian paths:

    \[
    \langle \pi_{\bar G}^+, t\rangle = \int_{0} ^ {\infty} e^{-t} \pi_{\bar G}^+(t) dt.
    \]
\end{prp}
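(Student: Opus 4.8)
The plan is to mirror the argument of Theorem \ref{cycle_characteristic}, replacing directed Hamiltonian cycles by directed Hamiltonian paths. First I would unwind the inner product from its definition: since $\langle f, g\rangle = \int_0^\infty t^{-1} e^{-t} f(t) g(t)\, dt$, taking $g(t) = t$ cancels the factor $t^{-1}$ in the measure, so that
\[
\langle \pi_{\bar G}^+, t\rangle = \int_0^\infty e^{-t}\, \pi_{\bar G}^+(t)\, dt,
\]
which already matches the right-hand side of the statement. Convergence is immediate here, as the effective weight is $e^{-t}$ and $\pi_{\bar G}^+$ is a polynomial.

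Next I would evaluate this integral by expanding $\pi_{\bar G}^+(t) = \sum_{k=1}^n (-1)^{n-k} p_k(\bar G)\, t^k$ and integrating term by term. Using the standard Gamma integral $\int_0^\infty e^{-t} t^k\, dt = k!$, the computation collapses to the substitution $t^k \mapsto k!$, giving
\[
\langle \pi_{\bar G}^+, t\rangle = \sum_{k=1}^n (-1)^{n-k}\, k!\, p_k(\bar G).
\]
Note the parallel with Theorem \ref{cycle_characteristic}: there the $t^{-1}$ weight produced the operation $t^k \mapsto (k-1)!$ and counted cycles, whereas here multiplying by $t$ produces $t^k \mapsto k!$ and will count paths.

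The combinatorial heart of the argument is then to identify this alternating sum with $p_1(G)$, the number of directed Hamiltonian paths in $G$ (a $1$-path cover). For this I would invoke the inclusion–exclusion identity (\ref{path_inclusion-exclusion}) specialized to $k = 1$, namely
\[
p_1(G) = \sum_{j=0}^{n-1} (-1)^j\, p_1(K_{n-j})\, p_{n-j}(\bar G).
\]
The key observation is that a $1$-path cover of a complete graph $K_m$ is simply a linear ordering of its $m$ vertices, so $p_1(K_m) = m!$. Re-indexing by $k = n-j$ (so $(-1)^j = (-1)^{n-k}$ and $p_1(K_{n-j}) = k!$) turns the right-hand side into exactly $\sum_{k=1}^n (-1)^{n-k}\, k!\, p_k(\bar G)$, completing the identification.

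The main obstacle is not analytic but combinatorial bookkeeping: one must apply the inclusion–exclusion identity with the correct multiplicity $p_1(K_m) = m!$, which is the path-analogue of the fact $c(K_m) = (m-1)!$ used in Theorem \ref{cycle_characteristic}, and it is worth stating explicitly. Everything else is a routine term-by-term integration, so I expect no difficulty beyond verifying the re-indexing.
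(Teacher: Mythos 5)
Your proof is correct, but it takes a different route from the paper. The paper's own proof is a three-line reduction: it specializes Theorem \ref{laguerre_inner} to $H = K_1$, noting that $\overline{K_1} = K_1$ and $\pi_{K_1}^+(t) = t$, so that $\langle \pi_{\bar G}^+, t\rangle = c(G \triangledown K_1)$, and then uses the bijection between directed Hamiltonian cycles in $G \triangledown K_1$ and directed Hamiltonian paths in $G$ (delete the apex vertex; its two neighbors on the cycle are the endpoints of the path). You instead redo the computation from first principles: term-by-term Gamma integration gives the substitution $t^k \mapsto k!$, and the inclusion--exclusion identity (\ref{path_inclusion-exclusion}) at $k = 1$, together with $p_1(K_m) = m!$, identifies the resulting alternating sum with $p_1(G)$; your re-indexing $k = n - j$ is correct. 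Note that your argument is equivalent to extracting the coefficient of $t$ in the duality theorem \ref{path_duality}, since $[t]\,\phi_\pi[t^k] = k!$ (a fact the paper itself records in Section 6.2), so you have in effect re-proved the relevant coefficient of duality rather than invoking it. What each approach buys: the paper's proof is shorter and reuses the cycle machinery uniformly across results, while yours is self-contained, avoids Theorem \ref{laguerre_inner} and the join construction entirely, and makes transparent why no correction term analogous to the $(-1)^{n-1} c(\bar G)$ of Theorem \ref{cycle_characteristic} appears here: a Hamiltonian path lying entirely in $\bar G$ is already captured by the $j = n-1$ (i.e.\ $p_1(\bar G)$) term of the inclusion--exclusion sum, whereas a Hamiltonian cycle lying entirely in $\bar G$ escapes the path-cover terms and forces the extra summand in the cycle case.
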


\begin{proof}
    Directed Hamiltonian cycles in $G \triangledown K_1$ correspond uniquely to directed Hamiltonian paths in $G$. In a Hamiltonian cycle, the neighbors of a vertex corresponding to $K_1$ can be thought of as the two ends of the Hamiltonian path in $G$. Use Theorem \ref{laguerre_inner} to find the number of Hamiltonian cycles in $G \triangledown K_1$.
\end{proof}

\section{The matching polynomial}

This section outlines the properties of matching polynomials. The proof strategies are similar to those of the path-cover polynomial. Section 4.1 expresses the matching polynomial of join graphs. Section 4.2 discusses the duality theorem, from which follows the second proof of Theorem \ref{matching_main}. Finally, we find the number of perfect matchings of a join graph using the inner product of Hermite polynomials.

\subsection{Matching polynomial of join graphs}

First, we prove the following lemma, which expresses the matching polynomial of a complete multipartite graph.

\begin{lm}\label{match_pre}

Define a distributive product $(\times)$ as $t ^ a \times t ^ b = \mu_{K_{a, b}}(t)$. Then, it is commutative and associative, and satisfies the following:

\[
t ^ {\alpha_1} \times t^ {\alpha_2} \times \cdots \times t ^ {\alpha_n} = \phi_\mu\left[
\phi_\mu^{-1}[t ^ {\alpha_1}] \phi_\mu^{-1}[t ^ {\alpha_2}] \cdots \phi_\mu^{-1}[t ^ {\alpha_n}]
\right] = \mu_{K_{\alpha_1, \alpha_2, \cdots, \alpha_N}}(t),
\]
where $\phi_\mu = e ^ {D^2 / 2}$, $\phi_\mu ^ {-1}= e ^ {-D^2 / 2}$.

\end{lm}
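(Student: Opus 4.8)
The plan is to mirror the structure of the proof of Lemma~\ref{path-pre}, replacing $k$-path covers with $k$-matchings throughout. The central combinatorial fact needed is the analogue of equation~(\ref{pi_fundamental}): that $\mu_{K_{\alpha_1,\ldots,\alpha_n}}(t) \times t^{\alpha_{n+1}} = \mu_{K_{\alpha_1,\ldots,\alpha_n+1}}(t)$, where the product is the distributive extension of $t^a \times t^b = \mu_{K_{a,b}}(t)$. First I would establish commutativity, which is immediate since $\mu_{K_{a,b}}(t) = \mu_{K_{b,a}}(t)$. Then I would prove the fundamental recurrence by expanding $\mu_{K_{\alpha_1,\ldots,\alpha_n}}(t) \times t^{\alpha_{n+1}}$ using the coefficients $m_i(K_{\alpha_1,\ldots,\alpha_n})$ and interpreting the result combinatorially: a matching in $K_{\alpha_1,\ldots,\alpha_{n+1}}$ is obtained by choosing a matching on the first $n$ parts and then matching some of the $\alpha_{n+1}$ new vertices into it, which is exactly what $\mu_{K_{i,\alpha_{n+1}}}(t)$ records. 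Associativity then follows formally, as in Lemma~\ref{path-pre}, from $\mu_{K_{a,b,c}}(t)$ being symmetric in its arguments.

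Once associativity and commutativity are in hand, Lemma~\ref{iso_prod} immediately supplies an operator $\phi_\mu$ with $\phi_\mu[t^n] = t^{\times n} = \mu_{K_n}(t)$, yielding the product formula in the statement. The remaining task is to identify this $\phi_\mu$ explicitly as $e^{D^2/2}$. For this I would verify directly that $e^{D^2/2}[t^n] = \mu_{K_n}(t)$. The point is that $\mu_{K_n}(t) = \sum_k m_k(K_n) t^{n-2k}$ where $m_k(K_n)$ counts ways to pick $k$ disjoint edges on $n$ labelled vertices; applying $\tfrac{1}{k!}(D^2/2)^k$ to $t^n$ produces $\tfrac{1}{k!} \tfrac{1}{2^k} \tfrac{n!}{(n-2k)!} t^{n-2k}$, and one checks that $\tfrac{n!}{2^k k! (n-2k)!}$ is precisely the number of $k$-matchings in $K_n$ (choose the $2k$ matched vertices, pair them up in $(2k-1)!! = \tfrac{(2k)!}{2^k k!}$ ways). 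Summing over $k$ gives $\mu_{K_n}(t)$, so $\phi_\mu = e^{D^2/2}$ and $\phi_\mu^{-1} = e^{-D^2/2}$, matching the earlier remark that $(-i)^n \mu^+_{K_n}$ relates to Hermite polynomials.

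I expect the main obstacle to be the combinatorial bijection underlying the fundamental recurrence~\eqref{pi_fundamental}'s analogue, specifically making precise what it means to ``attach'' the $\alpha_{n+1}$ new vertices. Unlike the path-cover case, where new vertices extend path ends, here each new vertex either stays unmatched (contributing a factor of $t$) or is matched to one of the existing vertices in the first $n$ parts. I would argue that summing $m_i(K_{\alpha_1,\ldots,\alpha_n}) \mu_{K_{i,\alpha_{n+1}}}(t)$ over $i$ correctly accounts for all matchings of $K_{\alpha_1,\ldots,\alpha_{n+1}}$ by conditioning on how many of the first-$n$-part vertices remain exposed; one must check that edges between the new part and each old part are exactly the edges of a $K_{i,\alpha_{n+1}}$ and that no double-counting occurs. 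The verification that $\tfrac{1}{k!}(D^2/2)^k t^n$ recursively unites two exposed vertices into a matched edge, parallel to the path-cover argument, is then the clean closing step.
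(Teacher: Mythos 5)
Your proposal is correct and takes essentially the same approach as the paper's proof: commutativity from $K_{a,b} \cong K_{b,a}$, the fundamental recurrence $\mu_{K_{\alpha_1,\ldots,\alpha_n}}(t) \times t^{\alpha_{n+1}} = \mu_{K_{\alpha_1,\ldots,\alpha_{n+1}}}(t)$ via conditioning on the number of exposed vertices, associativity from the symmetry of $\mu_{K_{a,b,c}}(t)$, Lemma~\ref{iso_prod} to produce $\phi_\mu$, and the direct computation $e^{D^2/2}[t^n] = \sum_{k \geq 0} \binom{n}{2k}(2k-1)!!\, t^{n-2k} = \mu_{K_n}(t)$. As a minor point, your identification $\phi_\mu = e^{D^2/2}$ is stated correctly, whereas the paper's proof contains a sign typo at the corresponding step (it writes ``which holds when $\phi_\mu = e^{-D^2/2}$'' before computing $e^{D^2/2}(t^n)$).
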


\begin{proof}
    Commutativity follows from the fact that $K_{a, b} \cong K_{b, a}$:
    \[
    t ^ {a} \times t^{b} = \mu_{K_{a, b}}(t) = \mu_{K_{b, a}}(t) = t ^ {b} \times t ^ {a}.
    \]
    Now, we show the following equality:
    \begin{equation}\label{mu_fundamental}
        \mu_{K_{\alpha_1, \alpha_2, \cdots, \alpha_{n}}} \times t ^ {\alpha_{n + 1}} = \mu_{K_{\alpha_1, \alpha_2, \cdots, \alpha_{n + 1}}}(t).
    \end{equation}
    Indeed,
    \[
    \mu_{K_{\alpha_1, \alpha_2, \cdots \alpha_n}}(t) \times t ^ {\alpha_{n + 1}} = \left(\sum_{i =0} ^ {\alpha_1 + \cdots + \alpha_n} m_{((\alpha_1 + \alpha_2 + \cdots +\alpha_n) - i) / 2} (K_{\alpha_1, \alpha_2, \cdots \alpha_n}) t ^ i \right) \times t ^ {\alpha_{n + 1}}
    \]
    \[
    =\sum_{i =0} ^ {\alpha_1 + \cdots + \alpha_n} m_{((\alpha_1 + \alpha_2 + \cdots + \alpha_n) - i) / 2} (K_{\alpha_1, \alpha_2, \cdots \alpha_n}) \mu_{K_{i, \alpha_{n + 1}}}(t).
    \]
    The expression above counts matchings in which $i$ vertices of the graph $K_{\alpha_1, \alpha_2, \cdots, \alpha_n}$ are unmatched, and these unmatched vertices are paired with $\alpha_{n + 1}$ other vertices. In other words, the expression is the same as the matching polynomial of $K_{\alpha_1, \alpha_2, \cdots \alpha_{n + 1}}$.
    Associativity is then immediate:

    \[
    (t ^ a \times t ^ b) \times t ^ c = \mu_{K_{a, b, c}}(t) = \mu_{K_{b, c, a}}(t) = (t ^ b \times t ^ c) \times t ^ a = t ^ a \times (t ^ b \times t ^ c).
    \]
    Associativity and (\ref{mu_fundamental}) yields
    \[
    t ^ {\alpha_1} \times t ^ {\alpha_2} \times \cdots \times t ^ {\alpha_n} = \mu_{K_{\alpha_{1}, \alpha_2, \cdots \alpha_n}}(t).
    \] 
    By Lemma \ref{iso_prod}, there exists some operator $\phi_\mu$ such that the following holds:
    \[
    t ^ {\alpha_1} \times t^ {\alpha_2} \times \cdots \times t ^ {\alpha_n} = \phi_\mu\left[
    \phi_\mu^{-1}[t ^ {\alpha_1}] \phi_\mu^{-1}[t ^ {\alpha_2}] \cdots \phi_\mu^{-1}[t ^ {\alpha_n}]
    \right].
    \]
    Note that
    \[
    t \times t \times \cdots \times t = \mu_{K_{1, 1, \cdots, 1}}(t) = \mu_{K_n}(t).
    \]
    In other words,
    \[
    \phi_\mu \left[t ^ n \right] = \mu_{K_n}(t),
    \]
    which holds when $\phi_\mu =  e ^ {-D^2 / 2}$:

    \[
    e ^ {D^2 / 2}(t ^ n) = \sum_{k \geq 0} \frac{D ^ {2k}}{2 ^ k k!} t ^ n = \sum_{k \geq 0} t ^ {n - 2k} \frac{n!}{(n - 2k)! 2 ^ k k!} = \sum_{k \geq 0} t ^ {n - 2k} \binom{n}{2k}(2k - 1)!!.
    \]
    It is not hard to see that the last expression is indeed $\mu_{K_n}(t)$, because there are $\binom{n}{2k}(2k - 1)!!$ ways to choose $k\text{-matchings}$ in $K_n$. 
\end{proof}

Now we generalize Lemma \ref{match_pre} for any join graph:

\begin{thm}\label{matching_main}
    Let $G$ and $H$ be simple graphs. Then, the matching polynomial $\mu_{G \triangledown H}(t)$ satisfies:

    \[
    \mu_{G \triangledown H}(t) = \phi_\mu \left[ \phi_\mu ^ {-1}[\mu_G(t)] \phi_\mu ^ {-1}[\mu_H(t)]\right].
    \]
\end{thm}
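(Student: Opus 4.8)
The plan is to follow the first proof of Theorem~\ref{path-main} almost verbatim, replacing path-covers by matchings and $\pi$ by $\mu$. Let $n_1 = |V(G)|$ and $n_2 = |V(H)|$. The starting point is a combinatorial decomposition of an arbitrary matching $M$ of $G \triangledown H$: restricting $M$ to the edges lying inside $G$ gives a matching $M_G$ of $G$, restricting to the edges inside $H$ gives a matching $M_H$ of $H$, and the remaining edges of $M$ are join edges, which form a matching of the complete bipartite graph on the $M_G$-uncovered vertices of $G$ and the $M_H$-uncovered vertices of $H$. Conversely, any such triple $(M_G, M_H, B)$ reassembles to a matching of $G \triangledown H$, so restriction is a bijection. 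This is the matching analogue of the path-cover bijection used in Theorem~\ref{path-main}.

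Next I would track the exponent, which records the number of uncovered vertices. If $M_G$ leaves $l_1$ vertices of $G$ uncovered and $M_H$ leaves $l_2$ vertices of $H$ uncovered, then the cross matching $B$ ranges over all matchings of $K_{l_1, l_2}$, and a cross matching with $r$ edges leaves exactly $l_1 + l_2 - 2r$ vertices of $G \triangledown H$ uncovered. Summing $t^{l_1 + l_2 - 2r}$ over all such $B$ reproduces $\mu_{K_{l_1,l_2}}(t)$ by definition. Grouping matchings of $G$ and of $H$ by their number of uncovered vertices, and noting that the number of matchings of $G$ leaving $l_1$ vertices uncovered is precisely $[t^{l_1}]\mu_G(t)$, I obtain
\[
\mu_{G \triangledown H}(t) = \sum_{l_1 = 0}^{n_1} \sum_{l_2 = 0}^{n_2} \left([t^{l_1}]\mu_G(t)\right)\left([t^{l_2}]\mu_H(t)\right)\mu_{K_{l_1, l_2}}(t).
\]

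From here the argument is purely algebraic. Invoking Lemma~\ref{match_pre} to rewrite $\mu_{K_{l_1, l_2}}(t) = \phi_\mu[\phi_\mu^{-1}[t^{l_1}]\phi_\mu^{-1}[t^{l_2}]]$, then using linearity of $\phi_\mu$ to pull it outside the double sum and linearity of $\phi_\mu^{-1}$ to pull it inside each of the two factors, I recover $\phi_\mu[\phi_\mu^{-1}[\mu_G(t)]\phi_\mu^{-1}[\mu_H(t)]]$, exactly mirroring the closing lines of the proof of Theorem~\ref{path-main}.

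The only genuinely delicate step is the combinatorial decomposition in the first paragraph: one must check both that restriction sends a matching of $G \triangledown H$ to a legitimate triple and that every triple reassembles uniquely, and---most importantly---that the number of uncovered vertices is additive in the way claimed, so that the $B$-sum collapses to $\mu_{K_{l_1,l_2}}(t)$ rather than some reweighted variant. Once this exponent bookkeeping is pinned down, the remainder is a routine transcription. An alternative and shorter route would mirror the second proof of Theorem~\ref{path-main}, deducing the result from the matching duality theorem of Section~4.2 together with the additivity of $\mu^{+}$ under disjoint union; but since that duality has not yet been established at this point, I would present the direct combinatorial proof above.
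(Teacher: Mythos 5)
Your proposal is correct and matches the paper's own proof essentially verbatim: the same decomposition of a matching of $G \triangledown H$ into its $G$-part, its $H$-part, and a cross matching of $K_{l_1,l_2}$ on the uncovered vertices, followed by the same application of Lemma~\ref{match_pre} and linearity of $\phi_\mu$ and $\phi_\mu^{-1}$. Your explicit check that a cross matching with $r$ edges leaves $l_1+l_2-2r$ vertices uncovered, so the $B$-sum collapses exactly to $\mu_{K_{l_1,l_2}}(t)$, is a welcome bit of bookkeeping that the paper states only in passing, but it is the same argument.
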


\begin{proof}
    Let $n_1$ and $n_2$ be the number of vertices in $G$ and $H$, respectively. By the definition of matching polynomials, the coefficient of $t^l$ in $\mu_G(t)$ is the number of matchings such that exactly $l$ vertices are unmatched. The matching polynomial \( \mu_{G \triangledown H}(t)\) can be written as:

    \[
    \mu_{G \triangledown H}(t) = \sum_{l_1 = 0} ^ {n_1} \sum_{l_2 = 0} ^ {n_2} m_{(n_1 - l_1) / 2} (G) m_{(n_2 - l_2) / 2}(H) \mu_{K_{l_1, l_2}}(t).
    \]
    This is true because, if $\mathcal M$ is a matching in $G \triangledown H$, and if $\mathcal{A} \subseteq \mathcal{M}$ is a matching in $G$ and $\mathcal{B} \subseteq \mathcal{M}$ is a matching in $H$, the matchings $\mathcal{M} \setminus (\mathcal{A} \cup \mathcal{B})$ are precisely the matchings of the complete bipartite graph. 
    


    By Lemma \ref{match_pre}, one has that $\mu_{K_{l_1, l_2}}(t) = t ^ {l_1} \times t^ {l_2} = \phi_\mu \left[ \phi_\mu ^ {-1} [t ^ {l_1}]  \phi_\mu ^ {-1} [t ^ {l_2}]\right]$. Thus,

    \[
    \mu_{G \triangledown H}(t) = \sum_{l_1 = 0} ^ {n_1} \sum_{l_2 = 0} ^ {n_2} m_{(n_1 - l_1) / 2} (G) m_{(n_2 - l_2) / 2}(H) \phi_\mu \left[ \phi_\mu ^ {-1} [t ^ {l_1}]  \phi_\mu ^ {-1} [t ^ {l_2}]\right]
    \]

    \[
    = \phi_\mu \left[ \sum_{l_1 = 0} ^ {n_1} \sum_{l_2 = 0} ^ {n_2} m_{(n_1 - l_1) / 2} (G) m_{(n_2 - l_2) / 2}(H) \phi_\mu ^ {-1} [t ^ {l_1}]  \phi_\mu ^ {-1} [t ^ {l_2}]
    \right]
    \]

    \[
    = \phi_\mu \left[
    \phi^{-1}_\mu\left[\sum_{l_1 = 0} ^ {n_1} m_{(n_1 - l_1) / 2} (G)  t ^ {l_1} \right]
    \phi^{-1}_\mu\left[\sum_{l_2 = 0} ^ {n_2} m_{(n_2 - l_2) / 2} (H) t ^ {l_2} \right]
    \right]
    \]
    \[
    = \phi_\mu \left[ \phi_\mu ^ {-1} \left[ \mu_G(t)\right] \phi ^ {-1}_\mu \left[ \mu_H(t)\right] \right].
    \]






    


\end{proof}

Loosely speaking, the choice of $\phi_\mu = e ^ {D ^ 2 / 2}$ is reflected by the fact that the operator selects two unmatched vertices and matches them, hence the second-order differentiation. The division by two is necessary to account for the order of differentiation.

\subsection{Duality of the matching polynomial}

We prove the duality relation between the matching polynomial of a graph $G$ and the signed matching polynomial of its complement $\bar G$ using the inclusion-exclusion principle. Refer to \cite{Lass} for the proof using algebra.

\begin{thm}\label{matching-duality} Let $G$ be a simple graph and $\bar G$ is its complement. Duality for the matching polynomial:

\[
\mu_G(t)= \phi_\mu \left[ \mu_{\bar G} ^ +(t)\right].
\]
    
\end{thm}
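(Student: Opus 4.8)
The plan is to follow the template of the proof of Theorem~\ref{path_duality} essentially verbatim, replacing path covers by matchings throughout. The engine will be an inclusion--exclusion identity that expresses the matching numbers of $G$ in terms of those of $\bar G$ and of complete graphs, playing here the role that (\ref{path_inclusion-exclusion}) played for path covers. Concretely, I would first establish
\[
m_k(G) = \sum_{j \ge 0} (-1)^j\, m_j(\bar G)\, m_{k-j}(K_{n-2j}),
\]
where $n = |V(G)|$. The combinatorial reading is parallel to the path case: a $j$-matching of $\bar G$ occupies $2j$ vertices, and together with a $(k-j)$-matching of the complete graph on the remaining $n-2j$ vertices it counts, with sign $(-1)^j$, the $k$-matchings of $K_n$ in which at least $j$ of the chosen edges lie outside $G$. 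Inclusion--exclusion over the forbidden edges of $\bar G$ then isolates exactly the $k$-matchings using only edges of $G$, namely $m_k(G)$.

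Next I would expand the right-hand side directly. Lemma~\ref{match_pre} supplies $\phi_\mu[t^m] = \mu_{K_m}(t) = \sum_i m_i(K_m)\,t^{\,m-2i}$, so substituting the signed matching polynomial $\mu_{\bar G}^+(t) = \sum_j (-1)^{\,n-j} m_j(\bar G)\, t^{\,n-2j}$ gives
\[
\phi_\mu\bigl[\mu_{\bar G}^+(t)\bigr] = \sum_{j} (-1)^{\,n-j} m_j(\bar G)\, \mu_{K_{n-2j}}(t) = \sum_{j}\sum_{i} (-1)^{\,n-j} m_j(\bar G)\, m_i(K_{n-2j})\, t^{\,n-2j-2i}.
\]
Writing $i = k-j$ groups the terms by the power $t^{\,n-2k}$, whose coefficient is $\sum_j (-1)^{\,n-j} m_j(\bar G)\, m_{k-j}(K_{n-2j})$. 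Comparing this with the inclusion--exclusion identity above should identify the coefficient with $m_k(G)$, and summing over $k$ yields $\phi_\mu[\mu_{\bar G}^+(t)] = \mu_G(t)$.

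The step I expect to need the most care is the sign bookkeeping. The inclusion--exclusion identity carries the weight $(-1)^j$, whereas the coefficient extracted from $\mu_{\bar G}^+$ carries $(-1)^{\,n-j}$; these differ by the global factor $(-1)^{\,n-2j}=(-1)^n$, so one must confirm that the sign convention chosen for the signed matching polynomial is precisely the one that absorbs this factor (equivalently, the convention compatible with $\mu_G^+(t) = (-i)^n\mu_G(it)$ and with $\phi_\mu^{-1}[t^n]=\mu_{K_n}^+(t)$). Tracking the degree offset $n-2j$ — the number of vertices left for the complete graph once a $j$-matching of $\bar G$ is removed — is exactly what forces the \emph{signed}, rather than unsigned, matching polynomial of $\bar G$ to appear in the identity. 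Apart from verifying these signs, the argument is routine reindexing, wholly parallel to the proof of Theorem~\ref{path_duality}; alternatively, one may bypass the combinatorics entirely and appeal to the algebraic proof of Lass~\cite{Lass}.
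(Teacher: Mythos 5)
Your proposal is correct and takes essentially the same route as the paper's proof: the same inclusion--exclusion identity (your formula is exactly the paper's (\ref{match_inclusion-exclusion}) after reindexing by matching size $j$ instead of unmatched-vertex count), followed by expanding $\phi_\mu\left[\mu_{\bar G}^+(t)\right]$ via $\phi_\mu[t^m]=\mu_{K_m}(t)$ and comparing coefficients of $t^{n-2k}$. Your sign caveat is in fact well placed: with the paper's displayed definition $\mu_G^+(t)=\sum_k(-1)^{n-k}m_k(G)t^{n-2k}$ the identity is off by the global factor $(-1)^n$ when $n$ is odd (the paper's own proof silently passes from $(-1)^{(n+k)/2}$ to $(-1)^{(n-k)/2}$, which differ by $(-1)^k=(-1)^n$ on the supported terms), and the convention $(-1)^k$ --- the one compatible with $\mu_G^+(t)=(-i)^n\mu_G(it)$, as you note --- is the one under which both your argument and the paper's go through.
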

\begin{proof}

First, we express the number of $(n - k) / 2\text{-matchings}$ $m_{(n - k) / 2}(G)$ as follows:

\begin{equation}\label{match_inclusion-exclusion}
    m_{(n - k) / 2}(G) = \sum_{j = k} ^ n (-1) ^ {(n - j) / 2} m_{(n - j) / 2} (\bar G)  m_{(j - k) / 2} (K_j).
\end{equation}

Indeed, because $m_{(n - j) / 2} (\bar G) m_{(j - k) / 2}$ chooses all $(n - k) / 2\text{-matchings}$ with at least $(n - j)$ invalid edges, by the inclusion-exclusion principle, we have the equality.

Expand the right-hand side:

\[
\phi_\mu \left[ \mu_{\bar G}^+(t)\right] = \phi_\mu \left[ 
\sum_{k = 0} ^ {n / 2} (-1) ^ {n - k} m_k(\bar G) t ^ {n - 2k}
\right] = 
\]
\[
 \sum_{k = 0} ^ n (-1) ^ {(n + k) / 2} m_{(n - k) / 2}(\bar G)\phi_\mu \left[ t ^ k \right] = 
\]
\[
\sum_{k = 0} ^ {n} (-1) ^ {(n - k) / 2} m_{(n - k) / 2}(\bar G) \sum_{j = 0} ^ {k} t ^ j m_{(k - j) / 2} (K_k) = 
\]
\[
\sum_{k = 0} ^ n t ^ k \sum_{j = k} ^ n (-1) ^ {(n - j) / 2} m_{(n - j) / 2} (\bar G)  m_{(j - k) / 2} (K_j).
\]

By (\ref{match_inclusion-exclusion}), one obtains

\[
\phi_\mu \left[ \mu_{\bar G} ^ +(t)\right] = \sum_{k = 0} ^ n t ^ k m_{(n - k) / 2}(G) = \mu_G(t).
\]

\end{proof}

The second proof of Theorem \ref{matching_main} can be obtained using duality.

\begin{proof}[The second proof of Theorem \ref{matching_main}]
    \[
    \mu_{G \triangledown H}(t) = \mu_{\overline{\bar G \cup \bar H}}(t) = \phi_\mu \left[\mu_{\bar G \cup \bar H} ^ {+}(t) \right]
    \]
    \[
    = \phi_\mu\left[\mu^+_{\bar G}(t) \mu^+_{\bar H}(t)\right] = \phi_\mu \left[ \phi_\mu ^ {-1}\left[ \mu_G(t) \right] \left[ \mu_H(t) \right] \right].
    \]
\end{proof}

Hermite polynomials are orthogonal under the inner product.

\[
\langle f, g\rangle = \frac{1}{\sqrt{2 \pi}}\int_{-\infty} ^ {\infty} e ^ {-t ^ 2 / 2} f(t) g(t) dt.
\]

A slight and obvious addition to the results of Lass \cite{Lass} is the interpretation of the inner product of Hermite polynomials as the number of perfect matchings in a join graph. Denote $m(G)$ as the number of perfect matchings in $G$.

\begin{thm}
    Let $G$ and $H$ be simple graphs. Then, the number of perfect matchings of $G \triangledown H$ can be expressed as the inner product of matching polynomials:

    \[
    m(G\triangledown H) = \langle \mu_{\bar G} ^ +,\mu_{\bar H}^+\rangle = \frac{1}{\sqrt{2\pi}} \int_{-\infty} ^ {\infty} e ^ {-t ^ 2 / 2} \mu^+_{\bar G}(t) \mu_{\bar H}^+(t)dt.
    \]
\end{thm}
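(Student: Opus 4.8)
The plan is to mirror the proof of Theorem \ref{laguerre_inner}, replacing the Laguerre functional with the Gaussian one and the path-cover machinery with its matching analogue. First I would unfold the inner product and exploit multiplicativity of the signed matching polynomial under disjoint union. Exactly as in the second proof of Theorem \ref{matching_main}, a matching of $\bar G \cup \bar H$ splits uniquely into a matching of $\bar G$ and one of $\bar H$, and tracking the signs gives $\mu^+_{\bar G}(t)\,\mu^+_{\bar H}(t) = \mu^+_{\bar G \cup \bar H}(t)$. Since $\bar G \cup \bar H = \overline{G \triangledown H}$, the integrand becomes $e^{-t^2/2}\mu^+_{\overline{G \triangledown H}}(t)$, so the whole statement reduces to the single-graph identity
\[
\frac{1}{\sqrt{2\pi}}\int_{-\infty}^{\infty} e^{-t^2/2}\,\mu^+_{\bar K}(t)\,dt = m(K)
\]
for an arbitrary simple graph $K$ (we then take $K = G \triangledown H$).

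The crux, and the step I expect to be the main obstacle, is identifying the Gaussian linear functional with the operator $\phi_\mu = e^{D^2/2}$ evaluated at the origin. Concretely I would prove the lemma
\[
\frac{1}{\sqrt{2\pi}}\int_{-\infty}^{\infty} e^{-t^2/2} f(t)\,dt = \bigl(e^{D^2/2} f\bigr)(0)
\]
for every polynomial $f$. By linearity it suffices to check $f(t) = t^n$: the Gaussian moment on the left is $(n-1)!!$ when $n$ is even and $0$ when $n$ is odd, while on the right only the term $k = n/2$ in $e^{D^2/2} t^n = \sum_k \tfrac{1}{2^k k!}\tfrac{n!}{(n-2k)!} t^{n-2k}$ survives at $t = 0$, yielding $\tfrac{n!}{2^{n/2}(n/2)!} = (n-1)!!$ (and $0$ for odd $n$). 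Equivalently, this moment $(n-1)!!$ is exactly the number of perfect matchings of $K_n$, which is the combinatorial heart of the identity.

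With the lemma in hand the conclusion is immediate. Applying the matching duality of Theorem \ref{matching-duality}, namely $\mu_K(t) = \phi_\mu\bigl[\mu^+_{\bar K}(t)\bigr] = e^{D^2/2}\bigl[\mu^+_{\bar K}(t)\bigr]$, I evaluate at $t = 0$:
\[
\frac{1}{\sqrt{2\pi}}\int_{-\infty}^{\infty} e^{-t^2/2}\,\mu^+_{\bar K}(t)\,dt = \bigl(e^{D^2/2}\mu^+_{\bar K}\bigr)(0) = \mu_K(0).
\]
Finally I would observe that the constant term of $\mu_K(t) = \sum_{k} m_k(K)\,t^{N-2k}$ (with $N = |V(K)|$) is precisely $m(K)$: when $N$ is even it equals $m_{N/2}(K)$, the number of perfect matchings, and when $N$ is odd there is no constant term, matching the fact that a graph on an odd number of vertices has none. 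Taking $K = G \triangledown H$ completes the argument. The only points needing care are this parity bookkeeping and confirming that the signed-polynomial multiplicativity carries the correct signs, both of which are routine once the moment-operator lemma is established.
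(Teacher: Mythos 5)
Your proof is correct, and it follows the paper's reduction exactly up to the last step: like the paper, you use multiplicativity $\mu^+_{\bar G}(t)\mu^+_{\bar H}(t)=\mu^+_{\bar G\cup\bar H}(t)$ together with $\bar G\cup\bar H=\overline{G\triangledown H}$ to collapse the inner product to a single Gaussian integral of $\mu^+_{\overline{G\triangledown H}}$. Where you diverge is in how that single-graph identity is settled: the paper simply cites Godsil for the fact that $\tfrac{1}{\sqrt{2\pi}}\int_{-\infty}^{\infty}e^{-t^2/2}\mu^+_{\bar K}(t)\,dt=m(K)$, whereas you prove it internally via the moment--operator lemma $\tfrac{1}{\sqrt{2\pi}}\int_{-\infty}^{\infty}e^{-t^2/2}f(t)\,dt=\bigl(e^{D^2/2}f\bigr)(0)$ (checked on monomials through the Gaussian moments $(n-1)!!$) combined with the paper's own duality, Theorem \ref{matching-duality}, giving $\bigl(e^{D^2/2}\mu^+_{\bar K}\bigr)(0)=\mu_K(0)=m(K)$. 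Your parity bookkeeping is right on both sides: for odd $|V(K)|$ the integrand has only odd powers so the integral vanishes, matching $\mu_K(0)=0$, and the sign convention $(-1)^{n-k}$ used in the paper is indeed multiplicative over disjoint unions as you claim. What your route buys is self-containedness and a conceptual point the paper leaves implicit --- that the Hermite linear functional is precisely evaluation of the umbral operator $\phi_\mu=e^{D^2/2}$ at the origin, so the integral identity is nothing but the duality theorem read off at $t=0$; what the paper's citation buys is brevity. Either way the argument is complete.
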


\begin{proof}
    By the definition of the inner product,

    \[
    \langle \mu_{\bar G} ^ +,\mu_{\bar H}^+\rangle = \frac{1}{\sqrt{2\pi}} \int_{-\infty} ^ {\infty} e ^ {-t ^ 2 / 2} \mu^+_{\bar G}(t) \mu_{\bar H}^+(t)dt = \frac{1}{\sqrt{2\pi}} \int_{-\infty} ^ {\infty} e ^ {-t ^ 2 / 2} \mu^+_{\bar G \cup \bar H}(t)dt.
    \]
    \[
    = \frac{1}{\sqrt{2\pi}} \int_{-\infty} ^ {\infty} e ^ {-t ^ 2 / 2} \mu^+_{\overline{G \triangledown H}}(t)dt,
    \]
    which is the number of perfect matchings in $G \triangledown H$ \cite{Godsil}.
\end{proof}

The norm induced by the inner product can be expressed as the number of perfect matchings in the join graph.

\begin{crl} Define the norm induced by the inner product

\[
\left\Vert f \right\Vert = \sqrt{\langle f, f\rangle}.
\]

The norm of a polynomial $\mu_G^+(t)$:

\[
\left\Vert  \mu_G^+ \right\Vert = \sqrt{m(G \triangledown G)}.
\]
\end{crl}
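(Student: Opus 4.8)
The plan is to read the corollary as the diagonal case $H = G$ of the immediately preceding theorem, being careful about which graph carries the complement. That theorem asserts, for all simple graphs $G, H$, that $\langle \mu_{\bar G}^+, \mu_{\bar H}^+\rangle = m(G \triangledown H)$. Since the statement I must prove involves the \emph{unbarred} polynomial $\mu_G^+$ inside the norm, I would not set $H = G$ directly; instead I would apply the theorem to the pair $(\bar G, \bar H)$. Because complementation is an involution, $\overline{\bar G} = G$ and $\overline{\bar H} = H$, so the theorem becomes $\langle \mu_G^+, \mu_H^+\rangle = m(\bar G \triangledown \bar H)$. Setting $H = G$ then gives $\langle \mu_G^+, \mu_G^+\rangle = m(\bar G \triangledown \bar G)$, and taking square roots yields the clean identity $\left\Vert \mu_G^+ \right\Vert = \sqrt{m(\bar G \triangledown \bar G)}$. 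The only ingredient is this substitution, which is immediate from involutivity and the fact that the theorem holds for every pair of graphs; no further computation is needed beyond the definition of the induced norm.

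The hard part — and here I want to flag a genuine discrepancy rather than a mere difficulty — is reconciling this derivation with the right-hand side \emph{as written}, namely $\sqrt{m(G \triangledown G)}$. The natural specialization produces $m(\bar G \triangledown \bar G)$, not $m(G \triangledown G)$, and these two quantities differ in general. A quick check with $G = K_2$ makes this concrete: one has $\mu_{K_2}^+(t) = t^2 - 1$, so
\[
\left\Vert \mu_{K_2}^+ \right\Vert^2 = \frac{1}{\sqrt{2\pi}} \int_{-\infty}^{\infty} e^{-t^2/2} (t^2 - 1)^2 \, dt = 3 - 2 + 1 = 2,
\]
whereas $m(K_2 \triangledown K_2) = m(K_4) = 3$ and $m(\overline{K_2} \triangledown \overline{K_2}) = m(K_{2,2}) = 2$. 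Thus $\left\Vert \mu_G^+ \right\Vert = \sqrt{m(\bar G \triangledown \bar G)}$ holds, but $\left\Vert \mu_G^+ \right\Vert = \sqrt{m(G \triangledown G)}$ fails already for $G = K_2$. There is no valid bridge from the former to the latter, since it would require $\left\Vert \mu_G^+\right\Vert = \left\Vert \mu_{\bar G}^+\right\Vert$, which the same example refutes ($2 \neq 3$).

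The resolution I would adopt is to place the complement consistently on both sides. The norm of the unbarred polynomial $\mu_G^+$ counts perfect matchings in $\bar G \triangledown \bar G$, while the unbarred join $G \triangledown G$ is counted by the \emph{barred} norm: setting $H = G$ directly in the theorem gives $\left\Vert \mu_{\bar G}^+ \right\Vert = \sqrt{m(G \triangledown G)}$, exactly parallel to the path-cover corollary $\left\Vert \pi_{\bar G}^+ \right\Vert = \sqrt{c(G \triangledown G)}$ stated earlier. Accordingly, the statement should read either $\left\Vert \mu_{\bar G}^+ \right\Vert = \sqrt{m(G \triangledown G)}$ or equivalently $\left\Vert \mu_G^+ \right\Vert = \sqrt{m(\bar G \triangledown \bar G)}$; under either corrected form the proof is the one-line specialization described in the first paragraph, and the Cauchy–Schwarz consequence that follows goes through unchanged.
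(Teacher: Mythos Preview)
Your analysis is correct, including the identification of a typo in the stated corollary. The paper offers no proof here---the corollary is meant to be the immediate specialization $H=G$ of the preceding theorem $\langle \mu_{\bar G}^+, \mu_{\bar H}^+\rangle = m(G\triangledown H)$---so your one-line argument is exactly what is intended.

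Your counterexample $G=K_2$ is decisive: $\lVert \mu_{K_2}^+\rVert^2 = 2 \neq 3 = m(K_4)$, so the formula as printed cannot hold. The parallel path-cover corollary in the paper reads $\lVert \pi_{\bar G}^+\rVert = \sqrt{c(G\triangledown G)}$ with the bar present, which confirms that the matching version should likewise read $\lVert \mu_{\bar G}^+\rVert = \sqrt{m(G\triangledown G)}$, or equivalently $\lVert \mu_G^+\rVert = \sqrt{m(\bar G\triangledown \bar G)}$. Under either corrected form your specialization proof is complete, and as you note the subsequent Cauchy--Schwarz corollary is unaffected.
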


Thus, we obtain the following inequality for the number of perfect matchings in a join graph:

\begin{crl} Let $G$ and $H$ be simple graphs. The Cauchy-Schwarz inequality for the number of perfect matchings:

\[
m(G \triangledown H) ^ 2 \leq m(G \triangledown G) m(H \triangledown H).
\]
\end{crl}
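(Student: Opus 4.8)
The plan is to recognize this inequality as nothing more than the classical Cauchy–Schwarz inequality for the Hermite inner product $\langle f, g\rangle = \frac{1}{\sqrt{2\pi}}\int_{-\infty}^{\infty} e^{-t^2/2} f(t) g(t)\,dt$, fed through the combinatorial interpretation established just above. Concretely, I would set $f = \mu_{\bar G}^+$ and $g = \mu_{\bar H}^+$ and invoke the abstract bound $\langle f, g\rangle^2 \le \langle f, f\rangle\,\langle g, g\rangle$, then read off all three terms combinatorially.

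First I would confirm that $\langle\cdot,\cdot\rangle$ is genuinely an inner product on the polynomial algebra, so that Cauchy–Schwarz applies. Symmetry and bilinearity are immediate from the integral, and positive-definiteness holds because the Gaussian weight $e^{-t^2/2}$ is strictly positive on $\mathbb R$: the identity $\langle f, f\rangle = \frac{1}{\sqrt{2\pi}}\int_{-\infty}^{\infty} e^{-t^2/2} f(t)^2\,dt = 0$ forces $f \equiv 0$. All the relevant integrals converge, since every polynomial is dominated by the Gaussian weight, so the form is well defined on $\mathbb K[t]$.

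Next I would translate each inner product back into a count of perfect matchings using the preceding theorem, which states $m(G\triangledown H) = \langle \mu_{\bar G}^+, \mu_{\bar H}^+\rangle$. That theorem gives $\langle f, g\rangle = m(G\triangledown H)$ directly; specializing it to $H = G$ and to $G = H$ yields $\langle f, f\rangle = m(G\triangledown G)$ and $\langle g, g\rangle = m(H\triangledown H)$, respectively. Since $m(G\triangledown H)$ is a nonnegative integer we have $\langle f, g\rangle^2 = m(G\triangledown H)^2$, and substituting all three identities into the Cauchy–Schwarz bound produces exactly $m(G\triangledown H)^2 \le m(G\triangledown G)\,m(H\triangledown H)$.

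There is essentially no obstacle here: the whole content of the statement is already packaged in the inner-product interpretation of $m(G\triangledown H)$, and the only point warranting a sentence is the positive-definiteness of the Hermite form, which is what licenses Cauchy–Schwarz in the first place. If I wanted to say a little more, I would note that equality holds precisely when $\mu_{\bar G}^+$ and $\mu_{\bar H}^+$ are linearly dependent; comparing leading terms (each signed matching polynomial on $n$ vertices is $(-1)^n t^n + \cdots$) shows this forces $G$ and $H$ to have equally many vertices and identical matching polynomials.
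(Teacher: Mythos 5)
Your proposal is correct and is exactly the paper's (implicit) argument: the corollary is stated as an immediate consequence of the theorem $m(G \triangledown H) = \langle \mu_{\bar G}^+, \mu_{\bar H}^+\rangle$ together with the Cauchy--Schwarz inequality for the Hermite inner product, which is precisely the substitution you carry out. Your added checks (positive-definiteness, convergence) and the equality-condition remark are fine but go beyond what the paper records.
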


\section{The clique-cover polynomial}

Section 5.1 shows how to express the clique-cover polynomials of join graphs. In Section 5.2, we prove the duality theorem, which relates the clique-cover polynomial and the chromatic polynomial. Duality allows us to express the clique-cover polynomial and the chromatic polynomial of join graphs.

\subsection{Clique-cover polynomial of join graphs}
First, let us prove the following lemma for complete multipartite graphs:

\begin{lm}\label{clique-pre}
    Define a distributive product $(\times)$ as $t ^ a \times t ^ b = \chi_{K_{a, b}}(t)$. Then, it is commutative and associative, and satisfies the following:

    \[
    t ^ {\alpha_1} \times t ^ {\alpha_2} \times \cdots t ^ {\alpha_n} = \phi_\xi \left[ \phi_\xi ^ {-1} \left[t ^ {\alpha_1} \right] \phi_\xi ^ {-1} \left[t ^ {\alpha_2} \right] \cdots \phi_\xi ^ {-1} \left[t ^ {\alpha_n} \right] \right],
    \]
    where the operator $\phi_\xi$ is defined implicitly with Stirling numbers of the second kind as follows:

    \[
    \phi_\xi[t ^ n] = \sum_{k = 1} ^ n \stirlings{n}{k} t ^ k,
    \]
    \[
    \phi_\xi^{-1}[t ^ n] = (t)_n,
    \]
    where $(t)_n = t (t - 1) \cdots (t - n + 1)$ is the falling factorial.
\end{lm}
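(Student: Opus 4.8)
The plan is to follow the template already established for Lemmas \ref{path-pre} and \ref{match_pre}, reading the distributive product as $t^a \times t^b = \xi_{K_{a,b}}(t)$ (the clique-cover polynomial), which is what the clique-cover setting and the stated operator $\phi_\xi$ require. Commutativity is then immediate from the isomorphism $K_{a,b} \cong K_{b,a}$, giving $t^a \times t^b = \xi_{K_{a,b}}(t) = \xi_{K_{b,a}}(t) = t^b \times t^a$. As in the earlier lemmas, the whole statement will rest on a single fundamental recurrence,
\[
\xi_{K_{\alpha_1,\cdots,\alpha_n}}(t) \times t^{\alpha_{n+1}} = \xi_{K_{\alpha_1,\cdots,\alpha_{n+1}}}(t),
\]
from which associativity, the telescoping product formula, and the operator identification all follow.

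To prove the recurrence I would expand $\xi_{K_{\alpha_1,\cdots,\alpha_n}}(t) = \sum_i s_i(K_{\alpha_1,\cdots,\alpha_n}) t^i$ and use distributivity to rewrite the left-hand side as $\sum_i s_i(K_{\alpha_1,\cdots,\alpha_n})\,\xi_{K_{i,\alpha_{n+1}}}(t)$. The combinatorial content is a bijection: given a clique cover of $K_{\alpha_1,\cdots,\alpha_{n+1}}$, delete the $\alpha_{n+1}$ vertices of the new part; since every clique meets each part in at most one vertex, the surviving cliques form an $i$-clique-cover of $K_{\alpha_1,\cdots,\alpha_n}$, and each new vertex either starts a singleton clique or joins exactly one of these $i$ cliques, with no two new vertices placed in the same clique (they are mutually non-adjacent). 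This attachment data is precisely a clique cover of the complete bipartite graph $K_{i,\alpha_{n+1}}$, whose cliques are singletons and matched edges. I would then check that the number of cliques matches on both sides: using $m$ matched edges leaves $i$ original cliques plus $\alpha_{n+1}-m$ new singletons, i.e.\ $i+\alpha_{n+1}-m$ cliques, which is exactly the exponent of $t$ counted by $\xi_{K_{i,\alpha_{n+1}}}$. Hence the identity holds as polynomials in $t$.

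With the recurrence in hand, associativity follows because $\xi_{K_{a,b,c}}(t)$ is symmetric in its three parts, and iterating the recurrence yields $t^{\alpha_1}\times\cdots\times t^{\alpha_n} = \xi_{K_{\alpha_1,\cdots,\alpha_n}}(t)$. Lemma \ref{iso_prod} then supplies an operator $\phi_\xi$ with $t^{\alpha_1}\times\cdots\times t^{\alpha_n} = \phi_\xi[\phi_\xi^{-1}[t^{\alpha_1}]\cdots\phi_\xi^{-1}[t^{\alpha_n}]]$ and $\phi_\xi[t^n] = \xi_{K_n}(t)$. Since $K_n = K_{1,\cdots,1}$ is complete, every vertex subset is a clique, so a $k$-clique-cover of $K_n$ is just a partition of its $n$ vertices into $k$ blocks; thus $s_k(K_n) = \stirlings{n}{k}$ and $\phi_\xi[t^n] = \sum_{k=1}^n \stirlings{n}{k} t^k$, as claimed. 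The formula $\phi_\xi^{-1}[t^n] = (t)_n$ then follows by Stirling inversion: writing $(t)_n = \sum_k (-1)^{n-k}\stirlingf{n}{k} t^k$ and applying $\phi_\xi$ termwise gives $\phi_\xi[(t)_n] = \sum_j ( \sum_k (-1)^{n-k}\stirlingf{n}{k}\stirlings{k}{j} ) t^j = t^n$, since the Stirling matrices of the first and second kind are mutual inverses.

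I expect the main obstacle to be making the bijection in the fundamental recurrence fully rigorous, in particular verifying the two structural constraints that make clique covers of $K_{i,\alpha_{n+1}}$ the correct bookkeeping object: each new vertex attaches to at most one existing clique, and distinct new vertices never share a clique, both forced by the independence of the new part. Everything else — commutativity, associativity, the telescoping, and the Stirling-number identification of $\phi_\xi$ — is routine and parallels the earlier lemmas.
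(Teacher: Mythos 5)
Your proposal matches the paper's proof essentially step for step: commutativity from $K_{a,b}\cong K_{b,a}$, the fundamental recurrence $\xi_{K_{\alpha_1,\ldots,\alpha_n}}(t)\times t^{\alpha_{n+1}}=\xi_{K_{\alpha_1,\ldots,\alpha_{n+1}}}(t)$ established by the same attach-to-an-existing-clique-or-form-a-singleton interpretation, associativity via the symmetry of $\xi_{K_{a,b,c}}$, the appeal to Lemma \ref{iso_prod}, and the identification $\phi_\xi[t^n]=\xi_{K_n}(t)=\sum_{k=1}^n \stirlings{n}{k}t^k$. You also correctly read the statement's $\chi_{K_{a,b}}$ as a typo for $\xi_{K_{a,b}}$ (the paper's own proof does the same silently), and your explicit Stirling-inversion verification of $\phi_\xi^{-1}[t^n]=(t)_n$ merely fills in a routine detail the paper asserts without proof.
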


\begin{proof}
    Commutativity follows from the fact that $K_{a, b} \cong K_{b, a}$:
    \[
    t ^ a \times t ^ b = \xi_{K_{a, b}}(t) = \xi_{K_{b, a}}(t) = t ^ b \times t ^ a.
    \]
    Now, let's show the following:
    \begin{equation}\label{xi_fundamental}
        \xi_{K_{\alpha_1, \alpha_2, \cdots, \alpha_n}}(t) \times t ^ {\alpha_{n + 1}} = \xi_{K_{\alpha_1, \alpha_2, \cdots, \alpha_{n + 1}}}(t).
    \end{equation}
    Indeed, expanding the left-hand side

    \[
    \xi_{K_{\alpha_1, \alpha_2, \cdots, \alpha_n}}(t) \times t ^ {\alpha_{n + 1}} = \left( \sum_{i = 0} ^ {\alpha_1 + \cdots + \alpha_n} s_i(K_{\alpha_1, \cdots \alpha_n}) t ^ i \right) \times t ^ {\alpha_{n + 1}} =
    \]
    \[\sum_{i = 0} ^ {\alpha_1 + \cdots + \alpha_n} s_i(K_{\alpha_1, \cdots \alpha_n}) \xi_{K_{i, \alpha_{n + 1}}}(t),
    \]
    which can be interpreted as choosing $i$ cliques from $K_{\alpha_1, \cdots \alpha_n}$ and attaching $\alpha_{n + 1}$ other points to either create new trivial cliques or attach them to some of the currently existing cliques. This is the same as the partitioning $K_{\alpha_1, \cdots \alpha_{n + 1}}$ into cliques, which proves (\ref{xi_fundamental}).
    
    The product $(\times)$ is associative:

    \[
    (t ^ a \times t ^ b) \times t ^ c = \xi_{K_{a, b, c}}(t) = \xi_{K_{b, c, a}} = (t ^ b \times t ^ c) \times t ^ a =t ^ a \times (t ^ b \times t ^ c).
    \]
    Also, using (\ref{xi_fundamental}), one obtains:

    \[
    t ^ {\alpha_1} \times t ^ {\alpha_2} \times \cdots \times t ^ {\alpha_n} = \xi_{K_{\alpha_1, \alpha_2, \cdots \alpha_n}}(t).
    \]
    By Lemma \ref{iso_prod}, there exists an operator $\phi_\xi$ such that

    \[
    t ^ {\alpha_1} \times t ^ {\alpha_2} \times \cdots \times t ^ {\alpha_n} = \phi_\pi \left[ \phi_\xi^{-1} [t ^ {\alpha_1}] \phi_\xi^{-1} [t ^ {\alpha_2}] \cdots \phi_\xi^{-1} [t ^ {\alpha_n}]\right].
    \]
    Note that 
    \[
    \phi_\xi(t ^ n) = t \times t \times \cdots \times t = \xi_{K_n}(t).
    \]
    Thus, one can define $\phi_\xi$ implicitly as follows:
    \[
    \phi_\xi[t ^ n] = \xi_{K_n}(t) = \sum_{k = 1} ^ {n} \stirlings{n}{k} t ^ k,
    \]
    which is true because the number of $k\text{-clique covers}$ in $K_n$ is $\stirlings{n}{k}$.
\end{proof}

The main theorem of this section is the formula for the clique-cover polynomial of the join graph:

\begin{thm}\label{clique-main}
    Let $G$ and $H$ be simple graphs. Then, the clique-cover polynomial $\xi_{G \triangledown H}(t)$ satisfies:

    \[
    \xi_{G \triangledown H}(t) = \phi_\xi \left[ \phi_\xi ^ {-1}[\xi_G(t)] \phi_\xi ^ {-1}[\xi_H(t)]\right].
    \]
\end{thm}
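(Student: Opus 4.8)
The proof will be structurally identical to the second proofs of Theorems~\ref{path-main} and \ref{matching_main}, so the cleanest route is to mirror that duality-based argument rather than repeating the combinatorial double-sum expansion. The key algebraic facts I need are already in hand: Lemma~\ref{clique-pre} establishes that the distributive product $t^a \times t^b = \xi_{K_{a,b}}(t)$ is commutative and associative with
\[
t^{\alpha_1} \times \cdots \times t^{\alpha_n} = \phi_\xi\left[\phi_\xi^{-1}[t^{\alpha_1}] \cdots \phi_\xi^{-1}[t^{\alpha_n}]\right] = \xi_{K_{\alpha_1,\dots,\alpha_n}}(t),
\]
and that $\phi_\xi[t^n] = \xi_{K_n}(t)$. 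First I would give the direct combinatorial proof: letting $n_1, n_2$ be the vertex counts of $G, H$, a clique-cover of $G \triangledown H$ decomposes into a clique-cover $\mathcal{A}$ of $G$, a clique-cover $\mathcal{B}$ of $H$, together with a clique-cover of the complete bipartite structure $K_{|\mathcal{A}|,|\mathcal{B}|}$ obtained by regarding the cliques of $\mathcal{A}$ and $\mathcal{B}$ as vertices (any clique in $G \triangledown H$ meeting both sides is a complete bipartite join of a clique-fragment on each side, since all cross-edges are present). This yields
\[
\xi_{G \triangledown H}(t) = \sum_{l_1=1}^{n_1} \sum_{l_2=1}^{n_2} s_{l_1}(G)\, s_{l_2}(H)\, \xi_{K_{l_1,l_2}}(t).
\]

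The second step is the algebraic collapse. I substitute $\xi_{K_{l_1,l_2}}(t) = t^{l_1} \times t^{l_2} = \phi_\xi[\phi_\xi^{-1}[t^{l_1}]\,\phi_\xi^{-1}[t^{l_2}]]$ from Lemma~\ref{clique-pre}, pull the linear operator $\phi_\xi$ outside the double sum, and then use bilinearity to factor the sum as a product of two single sums wrapped in $\phi_\xi^{-1}$, recognizing $\sum_{l_1} s_{l_1}(G)\, t^{l_1} = \xi_G(t)$ and likewise for $H$. This gives exactly
\[
\xi_{G \triangledown H}(t) = \phi_\xi\left[\phi_\xi^{-1}[\xi_G(t)]\,\phi_\xi^{-1}[\xi_H(t)]\right],
\]
which is the claim. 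Alternatively, the duality Theorem~(part 3 of the main theorem) together with $\overline{G \triangledown H} = \bar G \cup \bar H$ and the multiplicativity of clique/chromatic polynomials under disjoint union gives a one-line derivation, exactly parallel to the second proof of Theorem~\ref{matching_main}.

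The main obstacle is the combinatorial bijection justifying the double-sum formula, specifically verifying that every clique of $G \triangledown H$ that straddles both vertex sets corresponds to a genuine clique edge in the auxiliary $K_{|\mathcal{A}|,|\mathcal{B}|}$ and that this correspondence is a bijection onto clique-covers. The subtle point is that a single clique in $G \triangledown H$ may absorb several cliques from $\mathcal{A}$ or $\mathcal{B}$ simultaneously—one must check that the clique-cover structure of $K_{|\mathcal{A}|,|\mathcal{B}|}$ correctly encodes exactly these merging patterns, since a clique in a complete multipartite setting groups together one block-fragment from each side. This is the analogue of the path-merging and matching-completion arguments in the earlier proofs, and I expect it to go through by the same reasoning already validated in Lemma~\ref{clique-pre}; once that bijection is accepted, the algebraic steps are routine applications of the linearity of $\phi_\xi$ and Lemma~\ref{iso_prod}.
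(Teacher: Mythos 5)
Your proposal matches the paper's own proof of Theorem~\ref{clique-main} essentially verbatim: the same double-sum decomposition $\xi_{G \triangledown H}(t) = \sum_{l_1}\sum_{l_2} s_{l_1}(G)\, s_{l_2}(H)\, \xi_{K_{l_1,l_2}}(t)$ followed by the collapse via Lemma~\ref{clique-pre} and linearity of $\phi_\xi$, and the alternative you mention at the end is exactly the paper's second proof via duality. The subtlety you flag is harmless: every clique of $G \triangledown H$ meets each of $G$ and $H$ in at most one clique-fragment, so the merging pattern is precisely a clique cover of the bipartite $K_{|\mathcal{A}|,|\mathcal{B}|}$ (whose cliques have at most two vertices, one per side), which is the same correspondence the paper asserts.
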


\begin{proof}
    Let $n_1$ and $n_2$ be the number of vertices in $G$ and $H$, respectively. By definition, the coefficient of $t ^ k$ in $\xi_G(t)$ is the number of $k\text{-clique covers}$. If $\mathcal{A}$ is a clique cover of $G$ and $\mathcal{B}$ is a clique cover of $H$, then all clique covers of $K_{|\mathcal{A}|, |\mathcal{B}|}$ define clique covers in $G \triangledown H$. Conversely, all clique covers in $G \triangledown H$ can be expressed as clique covers in $K_{|\mathcal{A}|, |\mathcal{B}|}$ for some $\mathcal{A}$ and $\mathcal{B}$. So, the clique-cover polynomial can be written as

    \[
    \xi_{G \triangledown H}(t) = \sum_{l_1 = 0} ^ {n_1} \sum_{l_2 = 0} ^ {n_2} s_{l_1} (G) s_{l_2}(H) \xi_{K_{l_1, l_2}}(t).
    \]
    By Lemma \ref{clique-pre}, one has that $\xi_{K_{l_1, l_2}}(t) = t ^ {l_1} \times t^ {l_2} = \phi_\xi \left[ \phi_\xi ^ {-1} [t ^ {l_1}] \phi_\xi ^ {-1} [t ^ {l_2}]\right]$. Thus,

    \[
    \xi_{G \triangledown H}(t) = \sum_{l_1 = 0} ^ {n_1} \sum_{l_2 = 0} ^ {n_2} s_{l_1} (G) s_{l_2}(H) \phi_\xi \left[ \phi_\xi ^ {-1} [t ^ {l_1}] \phi_\xi ^ {-1} [t ^ {l_2}]\right].
    \]

    \[
    = \phi_\xi \left[ \sum_{l_1 = 0} ^ {n_1} \sum_{l_2 = 0} ^ {n_2} s_{l_1} (G) s_{l_2}(H)  \phi_\xi ^ {-1} [t ^ {l_1}] \phi_\xi ^ {-1} [t ^ {l_2}]
    \right]
    \]

    \[
    = \phi_\xi \left[
    \phi^{-1}_\xi\left[\sum_{l_1 = 0} ^ {n_1} s_{l_1} (G)  t ^ {l_1} \right]
    \phi^{-1}_\xi\left[\sum_{l_2 = 0} ^ {n_2} s_{l_2}(H) t ^ {l_2} \right]
    \right]
    \]
    \[
    = \phi_\xi \left[ \phi_\xi ^ {-1}[\xi_G(t)] \phi_\xi ^ {-1}[\xi_H(t)]\right].
    \]
\end{proof}

\subsection{Duality of the clique-cover polynomial and the chromatic polynomial}

It is possible to express the chromatic polynomial using clique covers of the graph complement (see \cite{Bollobas}, pg.151):

\begin{equation} \label{chrom-clique-duality}
    \chi_G(t) = \sum_{k = 1} ^ n c_k(\bar G) (t)_k.
\end{equation}

Define $\phi_\chi = \phi_\xi ^ {-1}$. Then, using  (\ref{chrom-clique-duality}), one obtains the duality relation between the clique-cover polynomial and the chromatic polynomial :

\[
\chi_G(t) = \phi_\chi \left[ \xi_{\bar G}(t) \right],
\]
\[
\xi_G(t) = \phi_\xi \left[ \chi_{\bar G}(t) \right].
\]

Equivalently, one can also state the relationship between the clique-cover polynomial and the chromatic polynomial as the expectation of the Poisson distribution:

\begin{prp} Let $X \sim \text{Pois($\lambda$)}$. Then, the clique-cover polynomial can be expressed as follows:
\[
\xi_{G}(\lambda) = \mathbb{E} \left[ \chi_{\bar G}(X) \right].
\]
\end{prp}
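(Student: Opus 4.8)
The plan is to reduce the proposition to the duality relation $\xi_G(t) = \phi_\xi\left[\chi_{\bar G}(t)\right]$ established just above, and to recognize the operator $\phi_\xi$, evaluated at $\lambda$, as the expectation operator $f \mapsto \mathbb{E}[f(X)]$ for $X \sim \mathrm{Pois}(\lambda)$. The cleanest route goes through the falling-factorial basis. I would start from the Bollob\'as expansion (\ref{chrom-clique-duality}), written for $\bar G$ in place of $G$, namely $\chi_{\bar G}(t) = \sum_{k=1}^n s_k(G)\,(t)_k$, where $s_k(G)$ is the number of $k$-clique covers of $G$ (an independent set of $\bar G$ is a clique of $G$, so the partition counts coincide). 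Applying $\mathbb{E}[\,\cdot\,]$ to $\chi_{\bar G}(X)$ and using linearity of expectation then reduces the whole statement to computing the factorial moments $\mathbb{E}[(X)_k]$.

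The one substantive computation is the classical factorial-moment identity for the Poisson law, $\mathbb{E}[(X)_k] = \lambda^k$. I would obtain it by writing out the Poisson sum $\sum_{j\ge0}(j)_k\,\lambda^j e^{-\lambda}/j!$, cancelling $j!$ against $(j)_k$, and reindexing $j \mapsto j-k$ so that the tail sum collapses to $e^{\lambda}$ and leaves $\lambda^k$. Substituting this into the expansion yields
\[
\mathbb{E}[\chi_{\bar G}(X)] = \sum_{k=1}^n s_k(G)\,\mathbb{E}[(X)_k] = \sum_{k=1}^n s_k(G)\,\lambda^k = \xi_G(\lambda),
\]
which is exactly the claim.

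Alternatively, to tie the statement directly to $\phi_\xi$, I would verify that $\phi_\xi[f(t)]\big|_{t=\lambda} = \mathbb{E}[f(X)]$ for every polynomial $f$. On the monomial basis this is the Touchard (Bell-polynomial) formula for Poisson moments, $\mathbb{E}[X^n] = \sum_{k=1}^n \stirlings{n}{k}\lambda^k = \phi_\xi[t^n]\big|_{t=\lambda}$; by linearity it then holds for all $f$, and taking $f = \chi_{\bar G}$ recovers $\xi_G(\lambda) = \phi_\xi[\chi_{\bar G}(t)]\big|_{t=\lambda} = \mathbb{E}[\chi_{\bar G}(X)]$. Both derivations rest on the same pair of facts, the Stirling/falling-factorial change of basis encoded in $\phi_\xi$ and the Poisson moment formulas, so neither presents a genuine obstacle; the only point demanding care is the index bookkeeping, namely that the sums start at $k=1$ rather than $k=0$ (harmless since $\stirlings{n}{0}=0$ for $n\ge1$) and that the coefficients $c_k(\bar G)$ of (\ref{chrom-clique-duality}) are identified with the clique-cover counts $s_k(G)$.
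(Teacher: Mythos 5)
Your proposal is correct and follows essentially the same route as the paper: the paper's proof likewise rests on the Poisson factorial-moment identity $\mathbb{E}[(X)_k]=\lambda^k$ combined with the duality $\xi_G(t)=\phi_\xi[\chi_{\bar G}(t)]$, i.e., the falling-factorial expansion of $\chi_{\bar G}$. You merely fill in details the paper leaves implicit (the derivation of the factorial moments, and the identification of the Bollob\'as coefficients with clique-cover counts), and your Touchard-formula variant is just the same computation read on the monomial basis.
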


\begin{proof}
    It is well-known that, for any $k$, the expectation of the falling factorial satisfies:

    \[
    \mathbb{E} \left[ (X)_k\right] = \lambda ^ k.
    \]
    Thus, using duality

    \[
    \mathbb{E} \left[ \chi_{\bar G}(X)\right] = \phi_{\xi} \left[ \chi_{\bar G}(\lambda)\right] = \xi_G(\lambda).
    \]
\end{proof}

One can use duality to prove Theorem \ref{clique-main}:

\begin{proof}[Second proof of Theorem \ref{clique-main}]
    \[
    \xi_{G \triangledown H}(t) = \phi_{\xi}\left[\chi_{\overline{G\triangledown H}}(t)\right] = \phi_{\xi}\left[\chi_{{\bar G\cup \bar H}}(t)\right]=
    \]
    
    \[\phi_{\xi} \left[ \chi_{\bar{G}}(t) \chi_{\bar H}(t)\right] =
    \phi_{\xi} \left[ \phi_{\xi} ^ {-1} \left[ \xi_G(t) \right] \phi_{\xi} ^ {-1} \left[ \xi_H(t) \right] \right].
    \]
\end{proof}

Similarly, we obtain the formula for the chromatic polynomial of join graphs:

\begin{thm}
    Let $G$ and $H$ be simple graphs. Define the operator $\phi_{\chi}[t ^ n] = (t)_n$ as the falling factorial. Then, the chromatic polynomial of $G \triangledown H$ satisfies:
    \[
    \chi_{G \triangledown H}(t) = \phi_\chi \left[ \phi_\chi ^ {-1}[\chi_G(t)] \phi_\chi ^ {-1}[\chi_H(t)]\right].
    \]
\end{thm}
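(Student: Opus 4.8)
The plan is to establish the chromatic analogue of the join-graph formulas by exactly mirroring the strategy used for the path-cover and clique-cover polynomials, now invoking the already-proved duality between $\chi$ and $\xi$. The key observation is that the complement operation turns a join into a disjoint union, $\overline{G \triangledown H} = \bar G \cup \bar H$, and that the chromatic polynomial is multiplicative over disjoint unions, $\chi_{\bar G \cup \bar H}(t) = \chi_{\bar G}(t)\,\chi_{\bar H}(t)$, since a proper coloring of a disjoint union is precisely an independent choice of proper colorings on each component. This is the structural fact that converts a multiplication of polynomials into an honest product, which the operator identities then repackage.

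The concrete chain of equalities I would write down is short. First apply the duality relation $\chi_G(t) = \phi_\chi[\xi_{\bar G}(t)]$ in the form $\chi_G(t) = \phi_\chi^{-1}[\cdot]$'s inverse—more precisely, I would start from $\phi_\chi^{-1}[\chi_G(t)] = \xi_{\bar G}(t)$, which is the statement that $\xi_{\bar G} = \phi_\xi[\chi_G]$ with $\phi_\chi = \phi_\xi^{-1}$. Then I would write
\[
\chi_{G \triangledown H}(t) = \phi_\chi\!\left[\xi_{\overline{G \triangledown H}}(t)\right] = \phi_\chi\!\left[\xi_{\bar G \cup \bar H}(t)\right].
\]
Next, using Theorem \ref{clique-main} for the clique-cover polynomial of the join $\bar G \triangledown \bar H$—or more directly the multiplicativity of $\xi$ over disjoint unions, $\xi_{\bar G \cup \bar H}(t) = \xi_{\bar G}(t)\,\xi_{\bar H}(t)$, which holds because a clique cover of a disjoint union is a clique cover of each piece chosen independently—I would substitute $\xi_{\bar G}(t)\,\xi_{\bar H}(t)$ and then recognize each factor via duality as $\phi_\chi^{-1}[\chi_G(t)]$ and $\phi_\chi^{-1}[\chi_H(t)]$. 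This yields
\[
\chi_{G \triangledown H}(t) = \phi_\chi\!\left[\phi_\chi^{-1}[\chi_G(t)]\,\phi_\chi^{-1}[\chi_H(t)]\right],
\]
which is exactly the claimed identity.

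The one point requiring care is the bookkeeping of which operator is the inverse of which: the paper sets $\phi_\chi = \phi_\xi^{-1}$, so that $\phi_\chi[t^n] = (t)_n$ while $\phi_\xi[t^n] = \sum_k \stirlings{n}{k} t^k$. I would make sure the duality relations are applied in the correct direction, namely $\xi_{\bar G}(t) = \phi_\xi[\chi_G(t)] = \phi_\chi^{-1}[\chi_G(t)]$, so that the factors inside the bracket genuinely reduce to $\phi_\chi^{-1}$ applied to the chromatic polynomials of $G$ and $H$. I do not anticipate a real obstacle here: the entire argument is a transport of Theorem \ref{clique-main} across the $\chi$–$\xi$ duality, and the only genuinely new ingredient—multiplicativity of $\xi$ (equivalently of $\chi$) over disjoint unions—is elementary. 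The main thing to verify is simply that all three operator identities ($\phi_\chi$, $\phi_\chi^{-1}$, and their relation to $\phi_\xi$) are consistently oriented throughout the short computation.
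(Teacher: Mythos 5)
Your proposal is correct and takes essentially the same route as the paper's own proof: apply the duality $\chi_{G \triangledown H}(t) = \phi_\chi[\xi_{\overline{G \triangledown H}}(t)]$, use $\overline{G \triangledown H} = \bar G \cup \bar H$ together with the multiplicativity of $\xi$ over disjoint unions, and then recognize each factor as $\xi_{\bar G}(t) = \phi_\xi[\chi_G(t)] = \phi_\chi^{-1}[\chi_G(t)]$. Your care about the orientation of the operators is consistent with the paper's convention $\phi_\chi = \phi_\xi^{-1}$, so no gap remains.
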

\begin{proof}
    By duality,
    \[
    \chi_{G \triangledown H}(t) = \phi_{\chi}\left[\xi_{\overline{G\triangledown H}}(t)\right] = \phi_{\chi}\left[\xi_{{\bar G\cup \bar H}}(t)\right] =\]
    \[\phi_{\chi} \left[ \xi_{\bar{G}}(t) \xi_{\bar H}(t)\right] =
    \phi_{\chi} \left[ \phi_{\chi} ^ {-1} \left[ \chi_G(t) \right] \phi_{\chi} ^ {-1} \left[ \chi_H(t) \right] \right].
    \]
\end{proof}

\section{Applications}

In previous sections, we showed that the matching polynomials, path-cover polynomials, clique-cover polynomials, and chromatic polynomials satisfy duality relations. In this section, we outline the applications of our results.

\subsection{Graph Polynomials of Cographs}

Makowsky \cite{Makowsky} gave an algorithm to compute matching and chromatic polynomials of graphs with bounded clique-width. Cographs are graphs with clique-width at most 2. In the context of cographs, Makowsky's algorithms work in $O(N ^ 5)$ for the matching polynomial and $O(N ^ {4})$ for the chromatic polynomial.

For cographs, there exists a unified algorithm to compute these polynomials by only changing the operator. The main application of our results is the following algorithm for computing graph polynomials in cographs.

\begin{dfn}
    A cograph is a graph generated by a single vertex using join and union operations:

    \begin{itemize}
        \item $K_1$ is a cograph
        \item If $G = (V, E)$ and $H = (U, F)$ are cographs, then so is their graph union $G \cup H$.
        \item If $G = (V, E)$ and $H = (U, F)$ are cographs, then so is their graph join $G \triangledown H$.
    \end{itemize}

\end{dfn}

It was shown in \cite{Corneil} that any cograph $G$ can be uniquely represented as a tree, and it is possible to construct such trees with time complexity $O(N + E)$, where $N$ and $E$ are the numbers of vertices and edges in $G$, respectively. Let $T_G$ be a tree corresponding to the cograph $G$.

The leaves of $T_G$ correspond to the vertices of $G$. Each black inner vertex represents a join operation. Each white inner vertex represents a union operation. The cograph associated with an internal vertex is obtained by applying the corresponding operation to the cographs induced by its child subtrees. 

\begin{algorithm}
\caption{Compute graph polynomial for cograph}
\begin{algorithmic}[1]
\Function{ComputeGraphPolynomial}{$T_G, v, \phi$}
    \If{$v$ is a leaf}
        \State \Return $t$
    \EndIf
    \State result $\gets 1$
    \ForAll{$u \in \text{Children}(T_G, v)$}
        \If{$v$ is black}
            \State result $\gets \text{result} * \phi^{-1}\bigl[\text{ComputeGraphPolynomial}(T_G, u, \phi)\bigr]$
        \Else
            \State result $\gets \text{result} * \text{ComputeGraphPolynomial}(T_G, u, \phi)$
        
        \EndIf
    \EndFor
    \If{$v$ is black}
        \State result $\gets \phi[\text{result}]$
    \EndIf
    \State \Return result
\EndFunction
\end{algorithmic}
\end{algorithm}

\begin{thm}\label{algo}
    Let $G$ be a cograph, and $T_G$ be its cotree. Let $v$ be the root of $T_G$. Suppose a graph polynomial $P_G(t)$ has a duality relation which involves an operator $\phi$. Then, $P_G(t) = \mbox{ComputeGraphPolynomial}(T_G, v, \phi)$. Also, Algorithm 1 works in time complexity $O(N ^ 2 \log N)$, where $N$ is the number of vertices in $G$.
\end{thm}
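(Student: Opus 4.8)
The plan is to prove the statement in two independent parts: first the correctness identity $P_G(t) = \textsc{ComputeGraphPolynomial}(T_G, v, \phi)$ by structural induction on the cotree, and then the $O(N^2 \log N)$ running-time bound by a counting argument over the nodes of $T_G$ combined with a fast-evaluation claim for the operators $\phi$ and $\phi^{-1}$.

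For correctness I would induct on a node $v$ whose subtree induces the cograph $G_v$, proving that the call returns $P_{G_v}(t)$. The base case is a leaf, where $G_v = K_1$ and the algorithm returns $t$; I would check that each of the four polynomials indeed satisfies $P_{K_1}(t) = t$. For a white (union) node with children $u_1, \dots, u_m$, the returned value is $\prod_i P_{G_{u_i}}(t)$ by the inductive hypothesis, so I need the multiplicativity $P_{G_1 \cup \cdots \cup G_m}(t) = \prod_i P_{G_i}(t)$ under disjoint union. This holds for all four polynomials, since $k$-covers, $k$-matchings, and colorings of a disjoint union factor as products (for the matching polynomial the $t^{n-2k}$ bookkeeping still multiplies out correctly). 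For a black (join) node the returned value is $\phi\big[\prod_i \phi^{-1}[P_{G_{u_i}}(t)]\big]$, and here I would invoke the binary join formulas (Theorems \ref{path-main}, \ref{matching_main}, \ref{clique-main}) together with the associativity of the product $a \times b = \phi[\phi^{-1}[a]\phi^{-1}[b]]$ established in the corresponding lemmas, to conclude $P_{G_{u_1} \triangledown \cdots \triangledown G_{u_m}}(t) = \phi\big[\prod_i \phi^{-1}[P_{G_{u_i}}(t)]\big]$. The only point needing care is that internal nodes are not assumed binary, so the multi-child case must be reduced to the binary formula via associativity rather than applied directly.

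For the running time I would first record two structural facts: the cotree $T_G$ has $N$ leaves and every internal node has at least two children, so it has $O(N)$ nodes and $O(N)$ edges; and every polynomial produced has degree at most $N$, hence $O(N)$ coefficients. Next I would bound the number of expensive operations: each non-root node is multiplied into its parent's running product exactly once, each edge into a black node carries one application of $\phi^{-1}$, and each black node carries one application of $\phi$, so there are $O(N)$ multiplications and $O(N)$ operator applications in total. Each multiplication of polynomials of degree at most $N$ costs $O(N \log N)$ by FFT, and multiplying the two counts gives the claimed $O(N^2 \log N)$.

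The main obstacle is the last ingredient: showing that each operator $\phi$ and its inverse $\phi^{-1}$ can be applied to a degree-$N$ polynomial in $O(N \log N)$ time, so that operator applications are no more costly than multiplications. For $\phi_\mu = e^{D^2/2}$ and $\phi_\pi = e^{tD^2}$ the action on the monomial basis has explicit coefficients (of the form $\binom{n}{2k}(2k-1)!!$ and the Lah numbers, respectively), and after rescaling coefficients by factorials the transform becomes a convolution that FFT evaluates in $O(N \log N)$; for $\phi_\xi$ and $\phi_\chi$ one uses the classical near-linear-time algorithms for the Stirling transform and for conversion between the monomial and falling-factorial bases (via finite differences, again a convolution after factorial rescaling). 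I would isolate this as a lemma: each operator appearing in the duality relations, and its inverse, is computable on a degree-$n$ input in $O(n \log n)$ arithmetic operations. Granting this lemma, the per-operation cost is $O(N \log N)$ throughout and the total is $O(N^2 \log N)$, completing the proof.
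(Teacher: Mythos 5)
Your proposal is correct, and on the decisive point it genuinely diverges from (and repairs) the paper's own proof. For correctness the paper simply says the validity is ``obvious because inner vertices of $T_G$ correspond to join or union operations''; your structural induction --- multiplicativity of all four polynomials under disjoint union at white nodes, and the binary join theorems combined with associativity of $a \times b = \phi\left[\phi^{-1}[a]\,\phi^{-1}[b]\right]$ to handle multi-child black nodes --- is exactly that waved-at argument, spelled out, and your flag about non-binary nodes is a real point the paper skips. The divergence is in the complexity accounting. The paper precomputes $\phi[t^n]$ and $\phi^{-1}[t^n]$ for $n \leq N$ and counts only the $O(N)$ FFT multiplications; it never costs the operator applications themselves. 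Applied via the precomputed table, $\phi^{\pm 1}$ on a degree-$d$ polynomial costs $O(d^2)$, and on a caterpillar-shaped cotree the total $\sum_v n_v^2$ is $\Theta(N^3)$, so strictly read the paper's proof does not reach $O(N^2 \log N)$. Your fast-transform lemma is what actually restores the bound: for $\phi_\pi$ and $\phi_\mu$ the factorial rescalings ($a_n \mapsto n!(n-1)!\,a_n$, resp.\ $a_n \mapsto n!\,a_n$) do reduce the Lah and $e^{D^2/2}$ transforms to single convolutions in $O(N\log N)$. The one soft spot is $\phi_\xi$/$\phi_\chi$: the Stirling and falling-factorial conversions are not plain rescaled convolutions; the standard route goes through values at $0,1,\dots,N$ (finite differences one way, evaluation/interpolation on an arithmetic progression the other, e.g.\ Bostan--Schost), which gives $O(N\log N)$ with those specialized results but $O(N\log^2 N)$ with generic multipoint evaluation --- so that case deserves a citation or short proof rather than an appeal to folklore. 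Net: the paper buys brevity; your route buys a worst-case bound that actually holds.
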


\begin{proof}
    The validity of the algorithm is obvious because inner vertices of $T_G$ correspond to join or union operations.

    Because $\phi$ is linear, it is sufficient to compute $\phi [t ^ n]$ and $\phi ^ {-1} [t ^ n]$ for $n \leq N$. Algorithms such as the Fast Fourier Transform allow for the computation of the product of two polynomials of degree $N$ in $O(N \log N)$ time. Algorithm 1 performs polynomial multiplication $O(N)$ times. Because the degrees of polynomials are at most $N$, the time complexity of Algorithm 1 is $O(N ^ 2 \log N)$.
\end{proof}

\newpage

It is possible to compute the following for a cograph $G$ in $O(N^2 \log N)$ time:

\begin{enumerate}
    \item Number of Perfect Matchings: $\mu_G(0)$,
    \item Number of directed Hamiltonian Paths: $\left. \frac{d}{dt}\pi_G(t) \right|_{t = 0},$
    \item Number of directed Hamiltonian Cycles: Corollary \ref{hc_char_0}, if $G$ is connected,
    \item Number of proper colorings in $\lambda$ colors: $\chi_G(\lambda)$,
    \item Number of acyclic orientations \cite{Stanley}: $\chi_G(-1).$
\end{enumerate}

\subsection{Hamiltonian Paths in Complete $m\text{-partite}$ graphs}

By Theorem \ref{path-main}, it is evident that the number of directed Hamiltonian paths in $K_{\alpha_1, \alpha_2, \cdots \alpha_m}$:

\begin{equation}\label{hampath}
    [t]\left(\phi_\pi \left[ 
    \prod_{i = 1} ^ m \phi_\pi ^ {-1} \left[
    t ^ {\alpha_i}
    \right]
    \right]
    \right). 
\end{equation}

By Proposition \ref{ham_integral}, if $X \sim \text{Exp}(1)$, one can also express (\ref{hampath}) using associated Laguerre polynomials.

\begin{thm} The number of directed Hamiltonian paths in $K_{\alpha_1, \alpha_2, \cdots, \alpha_m}$:
    \[
    \mathbb{E}\left[ 
        \prod_{i = 1} ^ m (-1) ^ {\alpha_i} \alpha_i!L_{\alpha_i}^{(-1)}(X)
        \right].
    \]
\end{thm}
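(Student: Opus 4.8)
The plan is to combine two facts already established in the excerpt. First, by Proposition~\ref{ham_integral}, for a simple graph $G$ the number of directed Hamiltonian paths equals $\langle \pi_{\bar G}^+, t\rangle = \int_0^\infty e^{-t}\pi_{\bar G}^+(t)\,dt$, which is exactly $\mathbb{E}[\pi_{\bar G}^+(X)]$ when $X\sim\text{Exp}(1)$ (since the exponential density is $e^{-t}$ on $[0,\infty)$, and the factor $t^{-1}$ in the measure $d\mu$ is cancelled by the $t$ in the inner product). Second, the identity $(-1)^n n!\,L_n^{(-1)}(t)=\pi_{K_n}^+(t)$ identifies the signed path-cover polynomial of a complete graph with an associated Laguerre polynomial. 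So the left-hand side $\prod_{i=1}^m(-1)^{\alpha_i}\alpha_i!\,L_{\alpha_i}^{(-1)}(X)$ is precisely $\prod_{i=1}^m \pi_{K_{\alpha_i}}^+(X)$.

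Next I would take $G=K_{\alpha_1,\dots,\alpha_m}$, so that its complement is the disjoint union $\bar G = K_{\alpha_1}\cup K_{\alpha_2}\cup\cdots\cup K_{\alpha_m}$. The key multiplicativity property of the signed path-cover polynomial under disjoint union—namely $\pi^+_{G_1\cup G_2}(t)=\pi^+_{G_1}(t)\,\pi^+_{G_2}(t)$, already used in the second proof of Theorem~\ref{path-main} and in Theorem~\ref{laguerre_inner}—gives
\[
\pi_{\bar G}^+(t)=\prod_{i=1}^m \pi_{K_{\alpha_i}}^+(t)=\prod_{i=1}^m (-1)^{\alpha_i}\alpha_i!\,L_{\alpha_i}^{(-1)}(t).
\]
Substituting this into the formula from Proposition~\ref{ham_integral} yields
\[
\mathbb{E}\!\left[\prod_{i=1}^m(-1)^{\alpha_i}\alpha_i!\,L_{\alpha_i}^{(-1)}(X)\right]=\mathbb{E}\bigl[\pi_{\bar G}^+(X)\bigr]=\langle\pi_{\bar G}^+,t\rangle,
\]
which is the number of directed Hamiltonian paths in $G=K_{\alpha_1,\dots,\alpha_m}$.

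The argument is essentially a substitution once the three ingredients are lined up, so there is no deep obstacle; the main point requiring care is the bookkeeping that converts the Lebesgue–Stieltjes/inner-product expression into an honest expectation against the $\text{Exp}(1)$ density. Specifically, one must confirm that $\langle f, t\rangle = \int_0^\infty t^{-1}e^{-t}\,t\,f(t)\,dt=\int_0^\infty e^{-t}f(t)\,dt=\mathbb{E}[f(X)]$, so that the product of Laguerre polynomials is integrated against the correct weight. With that identification in hand the result is immediate from multiplicativity under disjoint union together with $\pi_{K_n}^+(t)=(-1)^n n!\,L_n^{(-1)}(t)$.
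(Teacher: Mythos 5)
Your proposal is correct and follows exactly the route the paper intends: the paper states this theorem as an immediate consequence of Proposition~\ref{ham_integral} applied to $G=K_{\alpha_1,\dots,\alpha_m}$, using the same ingredients you spell out --- that $\bar G=K_{\alpha_1}\cup\cdots\cup K_{\alpha_m}$, that $\pi^+$ is multiplicative over disjoint unions, and that $\pi^+_{K_n}(t)=(-1)^n n!\,L_n^{(-1)}(t)$. Your verification that $\langle f,t\rangle=\int_0^\infty e^{-t}f(t)\,dt=\mathbb{E}[f(X)]$ for $X\sim\mathrm{Exp}(1)$ is precisely the bookkeeping the paper leaves implicit.
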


We use (\ref{hampath}) to obtain an explicit formula for the number of directed Hamiltonian paths in the complete $m\text{-partite graph}$ $K_{n, n, \cdots, n}$:

\begin{prp}
    The number of directed Hamiltonian paths in the complete $m\text{-partite graph}$ $K_{n, n, \cdots, n}$:
    \[
    n! ^ m m! (-1) ^ {n m} \sum_{k = m} ^ {n m} (-1) ^ k B_{k, m} \left( 
    \left\{ \binom{n - 1}{i - 1} \right\}_{i = 1} ^ {k  - m + 1}
    \right).
    \]
\end{prp}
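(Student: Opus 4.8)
The plan is to reduce the coefficient-extraction formula (\ref{hampath}), specialized to $\alpha_1=\cdots=\alpha_m=n$, to a single coefficient identity and then recognize a Bell-polynomial generating function. First I would use linearity of $\phi_\pi$ together with the fact that the coefficient of $t$ in $\phi_\pi[t^k]=\pi_{K_k}(t)$ is exactly $p_1(K_k)=L(k,1)=k!$, the number of directed Hamiltonian paths of $K_k$. Writing $\bigl(\phi_\pi^{-1}[t^n]\bigr)^m=\sum_k c_k t^k$, this gives
\[
[t]\Bigl(\phi_\pi\bigl[(\phi_\pi^{-1}[t^n])^m\bigr]\Bigr)=\sum_k k!\,c_k,
\]
so the whole problem collapses to computing the coefficients $c_k$ of an explicit $m$-th power and weighting them by $k!$.

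Next I would substitute the Lah-number expansion recorded after Theorem \ref{path-main}, namely $\phi_\pi^{-1}[t^n]=\sum_{i=1}^n(-1)^{n-i}\frac{n!}{i!}\binom{n-1}{i-1}t^i$. Pulling out the global sign and scalar, one checks $\phi_\pi^{-1}[t^n]=(-1)^n n!\,h(-t)$, where $h(t)=\sum_{i\ge 1}\binom{n-1}{i-1}\frac{t^i}{i!}$ is a (truncated) exponential generating function. Raising to the $m$-th power yields $\bigl(\phi_\pi^{-1}[t^n]\bigr)^m=(-1)^{nm}(n!)^m\,h(-t)^m$, which isolates all the signs into a clean $(-1)^{nm}$ prefactor and a reflection $t\mapsto -t$.

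The heart of the argument is to read off $h(t)^m$ via the defining identity of the partial Bell polynomials, $\frac{1}{m!}\bigl(\sum_{l\ge 1}x_l\frac{t^l}{l!}\bigr)^m=\sum_{k\ge m}B_{k,m}(x_1,x_2,\dots)\frac{t^k}{k!}$, applied with $x_l=\binom{n-1}{l-1}$. This gives $[t^k]h(t)^m=\frac{m!}{k!}B_{k,m}\bigl(\{\binom{n-1}{i-1}\}\bigr)$, the reflection contributes a factor $(-1)^k$, and hence $c_k=(-1)^{nm}(n!)^m(-1)^k\frac{m!}{k!}B_{k,m}$. Multiplying by $k!$ cancels the factorial denominator exactly, and summing produces the claimed expression; the degree bounds $m\le k\le nm$ are forced by the lowest and highest degree terms of $(\phi_\pi^{-1}[t^n])^m$, and the argument list $\{\binom{n-1}{i-1}\}_{i=1}^{k-m+1}$ matches the standard convention that $B_{k,m}$ depends on its first $k-m+1$ variables.

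I expect the main obstacle to be bookkeeping rather than anything conceptual: reconciling the $(-1)^{n-i}$ signs of the Lah expansion with the $(-1)^k$ produced by the $t\mapsto -t$ reflection, and fixing the normalization $x_l/l!$ so that the Bell-polynomial arguments come out as $\binom{n-1}{i-1}$ and not $\binom{n-1}{i-1}/i!$ or a signed variant. I would verify the signs on the small case $m=n=2$, where $K_{2,2}$ is the $4$-cycle with $8$ directed Hamiltonian paths, and confirm that both the direct evaluation $\sum_k k!\,c_k$ and the Bell-polynomial formula return $8$ before trusting the general signs.
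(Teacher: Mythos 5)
Your proposal is correct and follows essentially the same route as the paper: both specialize (\ref{hampath}) to equal parts, insert the Lah-number expansion of $\phi_\pi^{-1}[t^n]$, expand the $m$-th power via the partial Bell polynomial generating identity, and use that $[t]\phi_\pi$ sends $t^k$ to $k!$. The only cosmetic difference is that you factor the signs out up front as a reflection $t\mapsto -t$, whereas the paper carries the signed arguments $(-1)^{n-i}\binom{n-1}{i-1}$ inside $B_{k,m}$ and extracts the $(-1)^{nm}(-1)^k$ prefactor at the end by homogeneity.
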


\begin{proof}
    Using (\ref{hampath}), one obtains the number of directed Hamiltonian paths in $K_{n, n, \cdots n}$:
    \[
    [t] \left( \phi_\pi \left[ \left(
    \phi_\pi ^ {-1} \left[ 
    t ^ n
    \right] \right) ^ m
    \right]
    \right) = 
    \]
    \[
    ([t]  \phi_\pi) \left( \left[
    \left( 
    \sum_{k = 1} ^ n (-1) ^ {n - k} \frac{t ^ k}{k!} n! \binom{n - 1}{k - 1}
    \right) ^ m
    \right] \right).
    \]
    We can expand the inner exponent using the Bell polynomial:
    \[
    n! ^ m ([t] \phi_\pi) \left[ 
    m! \sum_{k = 1} ^ {n m} \frac{t ^ k}{k!} B_{k, m}\left(\left\{(-1) ^ {n - i}\binom{n - 1}{i - 1} \right\}_{i = 1} ^ {k - m + 1} \right)
    \right]
    = 
    \]
    \[
    n! ^ m m! \sum_{k = 1} ^ {n m} ([t] \phi_\pi) \left[
    \frac{t ^ k}{k!} B_{k, m}\left(\left\{(-1) ^ {n - i}\binom{n - 1}{i - 1} \right\}_{i = 1} ^ {k - m + 1} \right)
    \right].
    \]
    The operator $[t] \phi_\pi$ is the operator that replaces $t^k$ into $k!$:
    \[
    ([t] \phi_\pi)[t ^ n] = [t] \sum_{k = 1} ^ n \frac{n!}{k!} \binom{n - 1}{k - 1} t ^ k = n!. 
    \]
    So, 
    \[
    n! ^ m m! \sum_{k = 1} ^ {n m}
     B_{k, m}\left(\left\{(-1) ^ {n - i}\binom{n - 1}{i - 1} \right\}_{i = 1} ^ {k - m + 1} \right) = 
    \]
    \[
    n! ^ m m! (-1) ^ {nm} \sum_{k = 1} ^ {n m}
     (-1) ^ k B_{k, m}\left(\left\{\binom{n - 1}{i - 1} \right\}_{i = 1} ^ {k - m + 1} \right). 
    \]


\end{proof}

\subsection{Hamiltonian cycles in Complete $(m+1)\text{-partite graph}$} 

By Proposition \ref{HC}, the number of directed Hamiltonian cycles in $K_{\alpha_1, \alpha_2, \cdots, \alpha_{m + 1}}$:

\[
\sum_{k \geq 2} k! (k - 1)! \left(\left[ t ^ k \right] \phi_\pi \left[\prod_{i = 1} ^ {m} \phi^{-1}_\pi [t ^ {\alpha_i}]\right] \right) \left(\left[ t ^ k \right] t ^ {\alpha_{m + 1}}\right) = 
\]
\begin{equation}\label{HC_m-partite}
    (\alpha_{m + 1}) (\alpha_{m + 1} - 1)! \left(\left[ t ^ {\alpha_{m + 1}} \right] \phi_\pi \left[\prod_{i = 1} ^ {m} \phi^{-1}_\pi [t ^ {\alpha_i}]\right] \right).
\end{equation}

By Theorem \ref{cycle_characteristic}, it is possible to formulate the number of directed Hamiltonian cycles in terms of associated Laguerre polynomials.

\begin{thm} Let $X \sim \text{Exp(1)}$. The number of directed Hamiltonian cycles in $K_{\alpha_1, \alpha_2, \cdots, \alpha_{m + 1}}$:
    \[
    \mathbb{E} \left[ \frac{1}{X} \prod_{i = 1} ^ {m + 1}(-1) ^ {\alpha_i} \alpha_i! L_{\alpha_i} ^ {(-1)} (X) \right].
    \]
\end{thm}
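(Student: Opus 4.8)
Let $X \sim \text{Exp}(1)$. The number of directed Hamiltonian cycles in $K_{\alpha_1, \alpha_2, \cdots, \alpha_{m+1}}$ equals
\[
\mathbb{E} \left[ \frac{1}{X} \prod_{i = 1}^{m + 1}(-1)^{\alpha_i} \alpha_i! L_{\alpha_i}^{(-1)}(X) \right].
\]
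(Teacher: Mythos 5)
Your proposal contains no proof at all: the text between the statement and the end is simply a verbatim restatement of the theorem. Nothing is argued, so this is a complete gap rather than a flawed step. What needs to be shown is why this expectation counts Hamiltonian cycles, and the paper supplies (implicitly, via its earlier results) a three-step argument that your submission would need to reproduce or replace. Let $G = K_{\alpha_1, \ldots, \alpha_{m+1}}$ with $n = \alpha_1 + \cdots + \alpha_{m+1}$. First, the complement $\bar G = K_{\alpha_1} \cup \cdots \cup K_{\alpha_{m+1}}$ is a disjoint union of $m+1 \geq 2$ cliques, hence disconnected and in particular not Hamiltonian, so Corollary \ref{hc_char_0} applies and gives
\[
c(G) = \mathcal L\left[\pi^+_{\bar G}(t)\right] = \int_0^\infty t^{-1} e^{-t}\, \pi^+_{\bar G}(t)\, dt.
\]
Second, the signed path-cover polynomial is multiplicative over disjoint unions (this is used in the paper's second proof of Theorem \ref{path-main}: path covers of a disjoint union are exactly pairs of path covers of the parts, and the signs $(-1)^{n-k}$ multiply correctly), so $\pi^+_{\bar G}(t) = \prod_{i=1}^{m+1} \pi^+_{K_{\alpha_i}}(t)$. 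Third, the paper's identity $\pi^+_{K_a}(t) = (-1)^a\, a!\, L_a^{(-1)}(t)$ converts each factor into an associated Laguerre polynomial, and since $X \sim \mathrm{Exp}(1)$ has density $e^{-t}$ on $[0,\infty)$, the integral above is precisely $\mathbb{E}\bigl[\tfrac{1}{X} \prod_{i=1}^{m+1} (-1)^{\alpha_i} \alpha_i!\, L_{\alpha_i}^{(-1)}(X)\bigr]$. Each of these three steps is substantive --- the non-Hamiltonicity of $\bar G$ justifying the corollary, the factorization over the union, and the Laguerre identification together with the probabilistic rewriting of the Lebesgue--Stieltjes integral --- and none of them appears in what you wrote; as it stands you have asserted the theorem, not proved it.
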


A recurrence relationship for the number of directed Hamiltonian cycles in $K_{\alpha_1, \alpha_2, \cdots, \alpha_{m + 1}}$ was obtained in \cite{Horak}. Computationally, the formula in \cite{Horak} works in $O(n ^m)$ time. By Theorem \ref{algo}, our formula works in $O((nm) ^ 2 \log(nm))$.

When $\alpha_1 = \alpha_2 = \cdots = \alpha_{m + 1} = n$, we obtain an explicit formula:

\begin{prp}\label{HC_n-n...-n}

The number of directed Hamiltonian cycles in the complete $(m + 1)\text{-partite}$ graph $K_{n, n \cdots n}$:

 \[
    (-1) ^ {n m} n! ^ m (n - 1)! m! \sum_{k = n} ^ {nm} (-1) ^ k \binom{k - 1}{n - 1} B_{k, m} \left( 
    \left\{ \binom{n - 1}{i - 1} \right\}_{i = 1} ^ {k  - m + 1}
    \right).
    \]
\end{prp}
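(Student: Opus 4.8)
The plan is to specialize the general cycle count to equal parts and then carry out a coefficient extraction parallel to the Hamiltonian-path computation in the preceding proposition. Setting $\alpha_1 = \cdots = \alpha_{m+1} = n$ in the cycle count for $K_{\alpha_1,\dots,\alpha_{m+1}}$ obtained from Proposition \ref{HC}, the number of directed Hamiltonian cycles becomes
\[
n!\,(n-1)!\,\Bigl([t^n]\,\phi_\pi\bigl[(\phi_\pi^{-1}[t^n])^m\bigr]\Bigr),
\]
the prefactor $n!(n-1)!$ arising because the $(m+1)$-st graph is $\bar K_n$ with $\pi_{\bar K_n}(t)=t^n$, so the factor $[t^k]t^n=\delta_{k,n}$ selects the single summand $k!(k-1)!=n!(n-1)!$ in Proposition \ref{HC}, while the first $m$ parts assemble into $\pi_{K_{n,\dots,n}}(t)=\phi_\pi[(\phi_\pi^{-1}[t^n])^m]$ by Theorem \ref{path-main}. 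It then remains to evaluate the bracketed coefficient.

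First I would expand $\phi_\pi^{-1}[t^n]=n!\sum_{k=1}^{n}\frac{t^k}{k!}(-1)^{n-k}\binom{n-1}{k-1}$ using the Lah-number formula for $\phi_\pi^{-1}$ recorded after Theorem \ref{path-main}. Raising this to the $m$-th power and applying the standard partial-Bell-polynomial identity $\bigl(\sum_{k\ge 1}a_k t^k/k!\bigr)^m=m!\sum_{k\ge m}\frac{t^k}{k!}B_{k,m}(a_1,a_2,\dots)$ with $a_i=(-1)^{n-i}\binom{n-1}{i-1}$ gives
\[
(\phi_\pi^{-1}[t^n])^m = n!^{m}\,m!\sum_{k=m}^{nm}\frac{t^k}{k!}\,B_{k,m}\!\left(\Bigl\{(-1)^{n-i}\tbinom{n-1}{i-1}\Bigr\}_{i=1}^{k-m+1}\right),
\]
exactly as in the proof of the Hamiltonian-path formula, the upper limit $nm$ coming from $\deg\phi_\pi^{-1}[t^n]=n$.

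Next I would apply $\phi_\pi$ and extract $[t^n]$. Since $\phi_\pi[t^k]=\pi_{K_k}(t)=\sum_{j=1}^{k}\frac{k!}{j!}\binom{k-1}{j-1}t^j$, one has $[t^n]\phi_\pi[t^k]=\frac{k!}{n!}\binom{k-1}{n-1}$, and the factor $k!$ cancels the $1/k!$ from the Bell expansion. This yields
\[
[t^n]\,\phi_\pi\bigl[(\phi_\pi^{-1}[t^n])^m\bigr]=n!^{\,m-1}m!\sum_{k=n}^{nm}\binom{k-1}{n-1}\,B_{k,m}\!\left(\Bigl\{(-1)^{n-i}\tbinom{n-1}{i-1}\Bigr\}_{i=1}^{k-m+1}\right),
\]
where the lower limit may be taken to be $n$ because $\binom{k-1}{n-1}$ kills the terms with $k<n$ while $B_{k,m}=0$ for $k<m$. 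The decisive step is to pull the signs out of the Bell polynomial: each monomial of $B_{k,m}(x_1,x_2,\dots)$ is a product $\prod_i x_i^{j_i}$ with $\sum_i j_i=m$ and $\sum_i i\,j_i=k$, so substituting $x_i=(-1)^{n-i}\binom{n-1}{i-1}$ attaches the uniform factor $(-1)^{\sum_i (n-i)j_i}=(-1)^{nm-k}$ to every monomial, giving $B_{k,m}(\{(-1)^{n-i}\binom{n-1}{i-1}\})=(-1)^{nm-k}B_{k,m}(\{\binom{n-1}{i-1}\})$. Substituting this identity, multiplying back the prefactor $n!(n-1)!$, and using $(-1)^{nm-k}=(-1)^{nm}(-1)^{k}$ produces the claimed formula.

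I expect the main obstacle to be the sign-homogeneity step for the partial Bell polynomial, namely verifying that $\sum_i (n-i)j_i = nm-k$ is constant across all index-compositions $(j_i)$ contributing to $B_{k,m}$, so that the sign factors out cleanly; the remaining work is factorial bookkeeping identical to the Hamiltonian-path case, together with the minor care needed to justify the stated summation range.
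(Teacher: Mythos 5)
Your proposal is correct and follows essentially the same route as the paper's own proof: specializing the cycle count (the paper's equation (\ref{HC_m-partite}), derived from Proposition \ref{HC}) to equal parts with prefactor $n!\,(n-1)!$, expanding $(\phi_\pi^{-1}[t^n])^m$ via the partial-Bell-polynomial identity, extracting $[t^n]$ through the Lah-number expansion $\phi_\pi[t^k]=\sum_{l}\frac{k!}{l!}\binom{k-1}{l-1}t^l$, and factoring the sign $(-1)^{nm-k}$ out of $B_{k,m}$. Your explicit check that $\sum_i (n-i)j_i = nm-k$ is constant over all monomials of $B_{k,m}$ (so the sign pulls out uniformly) is a detail the paper performs silently, and your justification of the summation range via $\binom{k-1}{n-1}=0$ for $k<n$ matches the paper's final reindexing.
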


\begin{proof}
    Set $\alpha_1 = \alpha_2 = \cdots = \alpha_{m + 1} = n$ in (\ref{HC_m-partite}):
    \begin{equation}\label{HC_fast}
        n! (n - 1)! \left[ t ^ n \right] \phi_\pi \left[ \left( \sum_{k = 1} ^ n (-1) ^ {n + k} \frac{t ^ k}{k!} n! \binom{n - 1}{k - 1}\right) ^ m \right].
    \end{equation}
    The inner exponential can be expanded in terms of Bell polynomials:
    \[
    m! n! ^ {m + 1} (n - 1)! \left[ t ^ n \right]  \phi_\pi \left[ 
    \sum_{k = 1} ^ {nm} \frac{t ^ k}{k!} B_{k, m}\left(\left\{(-1) ^ {n - i} \binom{n - 1}{i - 1}\right\}_{i = 1} ^ {k - m + 1}\right)
    \right].
    \]
    Apply $\phi_\pi$ on $t ^ k$:
    \[
    m! n! ^ {m + 1} (n - 1)! \left[ t ^ n \right] \sum_{k = 1} ^ {nm} \frac{1}{k!} \left( \sum_{l = 1} ^ {k} \frac{k!}{l!} \binom{k - 1}{l - 1} t ^ l \right) B_{k, m}\left(\left\{(-1) ^ {n - i} \binom{n - 1}{i - 1}\right\}_{i = 1} ^ {k - m + 1}\right).
    \]
    Take the coefficient of $t ^ n$:
    \[
    m! n! ^ {m + 1} (n - 1)! \sum_{k = 1} ^ {nm}  \frac{1}{n!} \binom{k - 1}{n - 1} B_{k, m}\left(\left\{(-1) ^ {n - i} \binom{n - 1}{i - 1}\right\}_{i = 1} ^ {k - m + 1}\right) = 
    \]
    \[
     (-1) ^ {n m} n! ^ m (n - 1)! m! \sum_{k = n} ^ {nm} (-1) ^ k \binom{k - 1}{n - 1} B_{k, m} \left( 
    \left\{ \binom{n - 1}{i - 1} \right\}_{i = 1} ^ {k  - m + 1}
    \right).
    \]
\end{proof}

The expression in Proposition \ref{HC_n-n...-n} can be evaluated in $O(n m ^ 2)$ time, by computing the Bell polynomial in $O(n m ^ 2)$. A more efficient way to calculate the number of directed Hamiltonian cycles is to use (\ref{HC_fast}). Namely, one can use fast polynomial multiplication and binary exponentiation to reduce the time complexity to $O((nm) \log(nm) \log(m))$.

\printbibliography

\end{document}